\documentclass[12pt,twoside]{article}
\pagestyle{myheadings} \markboth{A. Jhilal  and N. Mahdou}{On
$(n,d)$-perfect rings}
\title{On $(n,d)$-perfect rings}
\date{}
\usepackage{amsfonts}
\usepackage{amsmath}
\usepackage{amssymb}
\usepackage{latexsym}
\textwidth=14cm \textheight=19cm
\parindent=0,3cm
\oddsidemargin=2cm
\newtheorem{thm}{\bf Theorem}[section]
\newtheorem{cor}[thm]{\bf Corollary}
\newtheorem{lem}[thm]{\bf Lemma}
\newtheorem{prop}[thm]{\bf Proposition}
\newtheorem{defn}[thm]{\bf Definition}

\newtheorem{exmp}[thm]{\bf Example}

\catcode`\ç=13
\defç{\c{c}}
\catcode`\é=13
\defé{\'e}
\catcode`\à=13
\defà{\`a}
\catcode`\è=13
\defè{\`e}
\catcode`\â=13
\defâ{\^a}
\catcode`\ù=13
\defù{\`u}
\catcode`\ê=13
\defê{\^e}
\catcode`\î=13
\defî{\^\i}
\catcode`\ô=13
\defô{\^o}
\newcommand{\field}[1]{\mathbb{#1}}

\newcommand{\Z }{\field{Z}}



\def\wdim{{\rm wdim}}

\def\gldim{{\rm gldim}}

\def\pd{{\rm pd}}

\def\fd{{\rm fd}}

\def\Coker{{\rm Coker}}

\def\sup{{\rm sup}}

\def\qf{{\rm qf}}

\def\Ext{{\rm Ext}}

\def\Tor{{\rm Tor}}


\begin{document}
\thispagestyle{empty}
\maketitle \vspace*{-2cm}
\begin{center}{\large\bf Abdellatif Jhilal and Najib Mahdou}

\bigskip
\small{Department of Mathematics, Faculty of Science and
Technology of Fez,\\ Box 2202, University S. M.
Ben Abdellah Fez, Morocco, \\  Jhilalabdo@hotmail.com \\
mahdou@hotmail.com}
\end{center}\bigskip\bigskip

\abstract{In this paper, we introduce the notion of
``$(n,d)$-perfect rings'' which is in some way a generalization of
the notion of ``$S$-rings''. After we give some basic results of
this rings and we survey the relationship between ``$A(n)$
property'' and ``$(n,d)$-perfect property''. Finally, we
investigate the ``$(n,d)$-perfect property'' in pullback rings.}

\small{\noindent{\bf Key Words.}$(n,d)$-perfect ring, $A(n)$ ring,
$n$-presented, homological dimensions, pullback ring.


\begin{section}{Introduction}

  The object of this paper is to introduce a doubly filtered set of
 classes of rings which may serve to shed further light on the
 structures of non-Noetherian rings.
  Throughout this work, all rings are commutative with identity
  element, and all modules are unitary. By a ``local'' ring we mean
  a (not necessarily Noetherian) ring with a unique maximal
  ideal.

The classes of rings we will define here are in some ways
  generalizations of the notion of ``A(n) rings'' which is
  introduced by Cox and Pendleton in \cite{c}.

 Let $R$ be a ring and let $M$ be an $R$-module. As usual we use
$\pd_R(M)$ and $\fd_R(M)$ to denote the usual projective and flat
dimensions of $M$, respectively.  The classical global and weak
dimension of $R$ are respectively $\gldim(R)$ and $\wdim(R)$ . If
$R$ is an integral domain, we denote its quotient field by
$\qf(R)$.
 \\ We say that
$M$ is $n$-presented $R$-module whenever
  there is an exact sequence $F_{n}\longrightarrow
  F_{n-1}\longrightarrow ...\longrightarrow F_{0} \longrightarrow
  M \longrightarrow 0$ of $R$-modules in which each $F_{i}$ is a
  free finitely generated $R$-module. In particular, 0-presented
  and $1$-presented $R$-modules are respectively finitely
  generated and finitely presented $R$-modules. We recall that a
  coherent ring is a ring such that each finitely generated ideal
  is finitely presented.
\\As in \cite{Bo, V}, we set $\lambda_{R}(M)=sup\{n/ M$ is
   $n$-presented$ \}$ except that we set $\lambda_{R}(M)=-1$ if
   $M$ is not finitely generated. Note that $\lambda_{R}(M) \geq
   n$ is a way to express the fact that $M$ is $n$-presented.

     The headings of the various sections indicate their content.
    Thus in the second section, we present the definition of this
    classes of rings and ( mostly we known) the basic results. The
    third section is devoted to establish the relationship between
    the ``$A(n)$ property'' and the property of the classes of
    rings which we will define below, and the fourth section of
    this survey deal with these classes of rings in pullbacks.
    Finally, we give an extensive set of references.
\\ General background materials can be found in Rotman
\cite{Ro} (1979), and Glaz \cite{G} (1989).
\end{section}

\begin{section}{ Definition and
basic results}\label{sec:2}

\quad In this section we introduce and study the $(n,d)$-perfect
 ring which is defined as follows.
\begin{defn}
Let $n$ and $d$ be a nonnegatives integers. A ring $R$ is said to
be an $(n,d)$-perfect ring, if every $n$-presented module with
flat dimension at most $d$, has projective dimension at most $d$.

\end{defn}

  We illustrate this notion with the following example. First it is
  well known that if a flat $R$-module $M$ is finitely presented, or
  finitely generated with $R$ either a semilocal ring or an integral domain,
  then $M$ is projective ( see \cite[Theorem 2]{E}).
  And a ring is called an $S$-ring if every finitely generated flat
  $R$-module is projective (see \cite{P}).

 \begin{exmp}
\begin{enumerate}
\item $R$ is an $S$-ring if and only if $R$ is an $(0,0)$-perfect
ring.
\item If $R$ is a semilocal ring, then $R$ is an $(n,n)$-perfect ring for
every $n\geq 0$.
\item If $R$ is a domain, then $R$ is an $(n,n)$-perfect ring for
every $n\geq 0$.
\item If $R$ is an $(n,d)$-perfect ring, then $R$ is an $(n',d)$-perfect ring for
every $n'\geq n$.
\item For every $n>d$,  $R$ is an $(n,d)$-perfect ring.
\item If $R$ is a perfect ring, then $R$ is  $(n,d)$-perfect for
every $n\geq 0$ an $d\geq0$.

\end{enumerate}
\end{exmp}

\begin{proof} Obvious.  \end{proof}

 The following proposition give two results concerning the
 Noetherian rings and coherent rings.
\begin{prop}
\begin{enumerate}

\item If $R$ is a Noetherian ring, then $R$ is an  $(n,d)$-perfect ring for
every $n\geq 0$ and $d\geq 0$.

\item If $R$ is a coherent ring, then $R$ is an  $(n,d)$-perfect ring for
every $n\geq 1$ and $d\geq 0$.
\end{enumerate}
\end{prop}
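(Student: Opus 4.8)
The plan is to reduce both statements to the well-known characterization of projectivity for flat modules via finite presentation, combined with a standard "syzygy-shifting" argument. For part (1), let $M$ be an $n$-presented $R$-module with $\fd_R(M)\le d$, where $R$ is Noetherian. Over a Noetherian ring every finitely generated module is $n$-presented for all $n$ (free resolutions can be taken with finitely generated free terms), so $M$ being $n$-presented is no restriction beyond being finitely generated. Now I would build a free resolution $\cdots\to F_1\to F_0\to M\to 0$ with each $F_i$ finitely generated free, and consider the $d$-th syzygy $K_d=\Ker(F_{d-1}\to F_{d-2})$. Since $\fd_R(M)\le d$, the module $K_d$ is flat; since $R$ is Noetherian and $K_d$ is a submodule of the finitely generated module $F_{d-1}$, it is itself finitely generated, hence finitely presented. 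By \cite[Theorem 2]{E} a finitely presented flat module is projective, so $K_d$ is projective, which gives $\pd_R(M)\le d$. Thus $R$ is $(n,d)$-perfect for all $n,d\ge 0$.

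For part (2), suppose $R$ is coherent and let $M$ be an $n$-presented module with $n\ge 1$ and $\fd_R(M)\le d$. The key point is that over a coherent ring, a $1$-presented (i.e. finitely presented) module has all its syzygies finitely presented: indeed every finitely generated submodule of a finitely presented module over a coherent ring is again finitely presented, so one can inductively produce a free resolution of $M$ with finitely generated free terms whose syzygies are all finitely presented. Taking the $d$-th syzygy $K_d$ in such a resolution, $K_d$ is flat (because $\fd_R(M)\le d$) and finitely presented, hence projective by \cite[Theorem 2]{E}, so $\pd_R(M)\le d$. By Example~(4) the same conclusion then holds for every $n'\ge 1$, so $R$ is $(n,d)$-perfect for all $n\ge 1$ and $d\ge 0$.

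The main obstacle in both parts is the same bookkeeping issue: one must ensure that the truncated syzygy module $K_d$ is not merely flat but actually finitely presented, so that \cite[Theorem 2]{E} applies. In the Noetherian case this is immediate from the fact that submodules of finitely generated modules are finitely generated (and everything in sight is automatically finitely presented). In the coherent case it requires the slightly more delicate observation that coherence propagates finite presentation down a resolution of a finitely presented module — this is exactly why one needs $n\ge 1$ (so that $M$ is at least finitely presented to start the induction) rather than $n\ge 0$. Once this is in place, the rest is the routine dimension-shifting fact that $\pd_R(M)\le d$ iff the $d$-th syzygy in a projective resolution is projective.
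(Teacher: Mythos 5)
Your proof is correct; the paper itself dismisses this proposition with the single word ``Obvious,'' so there is no argument to compare against, but what you write is precisely the standard reasoning the authors intend: pass to the $d$-th syzygy $K_d$ in a resolution by finitely generated free modules, note that $K_d$ is flat because $\fd_R(M)\le d$ and finitely presented because $R$ is Noetherian (resp.\ coherent with $M$ finitely presented), and conclude $K_d$ is projective by the finitely-presented-flat-implies-projective fact the paper already quotes from Endo. Your remark that the hypothesis $n\ge 1$ in the coherent case is exactly what guarantees the induction can start (a merely finitely generated module over a coherent ring need not have finitely presented syzygies) correctly identifies the only point where the two cases differ.
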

\begin{proof} Obvious.\end{proof}

Furthermore, we construct an example of ring which it is a
$(0,1)$- perfect ring and which is not a $(0,0)$-perfect ring
(Example \ref{14}). Also we exhibit an example of a
$(1,1)$-perfect ring which is not a $(0,1)$-perfect ring (Example
\ref{15}).

\begin{exmp}\label{14}
 Let $R$ be an hereditary and Von Neumann regular ring which it is
 not semi-simple. Then  $R$ is a $(0,1)$-perfect ring which is not a
$(0,0)$-perfect ring.

\end{exmp}

\begin{proof} The ring $R$ is a $(0,1)$-perfect ring since $R$
 is an hereditary ring. If $R$ is a $(0,0)$-perfect ring hence
 every finitely generated $R$-module is projective since $R$ is a
 Von Neumann regular ring, hence we have a contradiction with $R$ is
 not semi-simple.
\end{proof}

\begin{exmp}\label{15}
 Let $R$ be a non-Noetherian Pr\"ufer domain. Then $R$ is a
$(1,1)$-perfect domain which it is not a $(0,1)$-perfect domain.
 \end{exmp}

\begin{proof} The ring $R$ is a $(1,1)$-perfect ring since $R$ is a domain.
 On the other hand, we show that $R$ is not a $(0,1)$-perfect ring.
 Let $I$ be a not finitely generated ideal of $R$ (since $R$ is not Noetherian), then $I$ is not
 projective. Of course $I$ is flat since $\wdim(R)\leq 1$. Thus
 $R/I$ is $0$-presented with $\fd_{R}(R/I) \leq 1$ and $\pd_{R}(R/I)\geq
 2$ as desired. \end{proof}

 Next we give a homological characterization of $(n,d)$-perfect
 ring.

 \begin{thm}\label{3}
Let $R$ be a commutative ring. Then the following are equivalent.
\begin{enumerate}
\item $R$ is an $(n, d)$-perfect ring.
\item $\Ext^{d+1}_{R}(M,N)= 0$ for all $R$-modules $M$, $N$ such
that $\lambda_{R}(M)\geq n$, \\$\fd_{R}(M)\leq d$ and
$\fd_{R}(N)\leq d$.

\item $\Ext^{d+1}_{R}(M,N)= 0$ for all $R$-modules $M$, $N$ such
that $\lambda_{R}(M)\geq n$,\\ $\lambda_{R}(N)\geq n-(d+1)$,
$\fd_{R}(M) \leq d$ and $\fd_{R}(N)\leq d$.

\end{enumerate}

 \end{thm}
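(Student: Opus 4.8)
The plan is to prove the cycle of implications $(1)\Rightarrow(2)\Rightarrow(3)\Rightarrow(1)$. The implication $(2)\Rightarrow(3)$ is immediate, since condition $(3)$ imposes on $N$ the extra hypothesis $\lambda_R(N)\geq n-(d+1)$ and is therefore formally weaker. So the substance lies in $(1)\Rightarrow(2)$ and $(3)\Rightarrow(1)$.

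For $(1)\Rightarrow(2)$: let $M$ be an $R$-module with $\lambda_R(M)\geq n$ and $\fd_R(M)\leq d$, and let $N$ be any $R$-module with $\fd_R(N)\leq d$. By hypothesis $(1)$, $M$ is $n$-presented with flat dimension at most $d$, hence $\pd_R(M)\leq d$. Then $\Ext^{d+1}_R(M,N)=0$ because $\Ext^{k}_R(M,-)=0$ for all $k>\pd_R(M)$. Here the constraint on $N$ is not even needed beyond triviality; I keep it in the statement only to parallel $(3)$. (One should note that if $M$ is not finitely generated then $\lambda_R(M)=-1<n$, so the hypothesis $\lambda_R(M)\geq n$ forces $M$ to be at least finitely generated, and the definition of $(n,d)$-perfect applies.)

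For $(3)\Rightarrow(1)$: let $M$ be an $n$-presented $R$-module with $\fd_R(M)\leq d$; we must show $\pd_R(M)\leq d$, i.e. that $\Ext^{d+1}_R(M,N)=0$ for \emph{every} $R$-module $N$. The point is that $(3)$ only gives this vanishing for $N$ with $\lambda_R(N)\geq n-(d+1)$ and $\fd_R(N)\leq d$, so I must manufacture, from an arbitrary $N$, a module to which $(3)$ applies, without changing the relevant $\Ext$. The standard device: take a partial projective resolution of $M$ of length $d$, say $0\to K_d\to F_{d-1}\to\cdots\to F_0\to M\to 0$ with each $F_i$ free finitely generated (possible because $M$ is $n$-presented and $n\geq d$ in the nontrivial range — if $n>d$ we are already done by Example item (5), so assume $n\leq d$; here one uses that an $n$-presented module admits a resolution by finitely generated frees out to step $n\geq d$... ). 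Then $\pd_R(M)\leq d$ iff $K_d$ is projective, and since $\fd_R(M)\leq d$ gives $K_d$ flat, it suffices to show $K_d$ is projective; dimension-shifting gives $\Ext^{d+1}_R(M,N)\cong\Ext^1_R(K_d,N)$ for all $N$. Now $K_d$ is finitely presented (in fact $\lambda_R(K_d)\geq n-d\geq n-(d+1)$ by shifting $\lambda$), so applying $(3)$ with $M$ replaced by... — the cleanest route is: show $\Ext^1_R(K_d,N)=0$ for all $N$ by first reducing to $N$ finitely generated, or rather by using that $\Ext^{d+1}_R(M,N)\cong \Ext^1_R(K_d,N)$ and observing $\lambda_R(K_d)\geq n-(d+1)$, $\fd_R(K_d)=0\leq d$, so $(3)$ applied to the pair $(M,K_d)$ yields $\Ext^{d+1}_R(M,K_d)=0$, i.e. the sequence $0\to K_d\to F_{d-1}\to\cdots$ splits off $K_d$, forcing $K_d$ projective, hence $\pd_R(M)\leq d$.

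The main obstacle will be the bookkeeping in $(3)\Rightarrow(1)$: one must check that an $n$-presented module of flat dimension $\leq d$ genuinely has a $d$-th syzygy $K_d$ that is finitely generated (so that $\lambda_R(K_d)$ is defined and $\geq n-(d+1)$) and flat, and then recognize the vanishing $\Ext^{d+1}_R(M,K_d)=0$ as exactly the splitting criterion that makes $K_d$ a direct summand of a finitely generated free module, hence projective. The index arithmetic $\lambda_R(K_d)\geq \lambda_R(M)-d$ for a $d$-th syzygy, together with separating out the easy case $n>d$, is the only delicate part; everything else is dimension-shifting and the elementary fact $\pd_R(M)\leq d \iff \Ext^{d+1}_R(M,-)\equiv 0$.
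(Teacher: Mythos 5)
Your overall strategy is the same as the paper's (which routes through Lemma \ref{1} and Lemma \ref{17}): dimension-shift down to a syzygy and use a syzygy of $M$ itself as the test module $N$ in condition $(3)$. The implications $(1)\Rightarrow(2)\Rightarrow(3)$ are fine. But the decisive step of $(3)\Rightarrow(1)$ contains an off-by-one error that breaks the argument. You apply $(3)$ to the pair $(M,K_d)$ to get $\Ext^{d+1}_{R}(M,K_d)=0$, which by your own dimension shift is $\Ext^{1}_{R}(K_d,K_d)=0$, and then claim this splits $K_d$ off from $F_{d-1}$. It does not: the obstruction to splitting $0\to K_d\to F_{d-1}\to K_{d-1}\to 0$ lives in $\Ext^{1}_{R}(K_{d-1},K_d)\cong\Ext^{d}_{R}(M,K_d)$, not in $\Ext^{d+1}_{R}(M,K_d)$, and the vanishing of $\Ext^{1}_{R}(K_d,K_d)$ by itself says nothing about projectivity of $K_d$. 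The correct test module is one syzygy deeper: choose $0\to K_{d+1}\to F_d\to K_d\to 0$ with $F_d$ free; then $K_{d+1}$ is flat, $\lambda_{R}(K_{d+1})\geq n-(d+1)$, and $(3)$ applied to $(M,K_{d+1})$ gives $\Ext^{d+1}_{R}(M,K_{d+1})\cong\Ext^{1}_{R}(K_d,K_{d+1})=0$, which is exactly the splitting criterion making $K_d$ a direct summand of $F_d$, hence projective. This is also the true explanation of the bound $n-(d+1)$ in the statement --- it is the presentation degree of the $(d+1)$-th syzygy --- whereas your reading ($\lambda_{R}(K_d)\geq n-d\geq n-(d+1)$, treating the extra $-1$ as slack) is the symptom of the misindexing. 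This is precisely what the paper's Lemma \ref{1} does in the case $d=0$: it tests $M$ against its own first syzygy $K$.

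A second, smaller point: your finiteness bookkeeping for the case $n\leq d$ is left unresolved. When $n<d$ you cannot guarantee that the resolution stays finitely generated out to step $d$, so $K_d$ and $K_{d+1}$ need not be finitely generated and your claim that $K_d$ is finitely presented fails. This is harmless, but for a reason you should state: for $n\leq d$ the condition $\lambda_{R}(N)\geq n-(d+1)$ is satisfied by \emph{every} module (since $\lambda_{R}(N)\geq -1$ always), so $(3)$ still applies to $K_{d+1}$ even when it is not finitely generated; and for $n>d$ the whole theorem is vacuous by Example 2.2(5). With the corrected test module and this case split made explicit, the proof goes through and coincides with the paper's.
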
\bigskip

The proof of this Theorem involves the following Lemmas.
\begin{lem}\label{1}
Let $R$ be a ring, and let $M$ be an  $n$-presented flat
$R$-module, where $n\geq 0$. Then  $M$ is projective if and only
if $\Ext^{1}_{R}(M,N)=0$ for all $R$-modules $N$ such that
$\lambda_{R}(N)\geq n-1$ and $N$ is a flat $R$-module.

\end{lem}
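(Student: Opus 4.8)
The plan is to prove the nontrivial direction: assuming $\Ext^{1}_{R}(M,N)=0$ for every flat $R$-module $N$ with $\lambda_{R}(N)\geq n-1$, deduce that the $n$-presented flat module $M$ is projective. Since $M$ is $n$-presented with $n\geq 1$ (the case $n=0$ must be handled separately, see below), choose a short exact sequence $0\to K\to F\to M\to 0$ with $F$ free of finite rank; because $M$ is $n$-presented, $K$ is $(n-1)$-presented, hence $\lambda_{R}(K)\geq n-1$. Moreover, since $M$ is flat, the sequence stays exact after tensoring and one checks that $K$ is flat as well (a submodule kernel in $0\to K\to F\to M\to 0$ with $F$ and $M$ flat forces $K$ flat, via the long exact $\Tor$ sequence: $\Tor^{R}_{1}(M,-)=0$ gives $\Tor^{R}_{1}(K,-)\cong \Tor^{R}_{2}(M,-)=0$ and one iterates, or simply notes $\fd_R(K)\le \max(0,\fd_R(M)-1)=0$). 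Thus $K$ is a flat module with $\lambda_{R}(K)\geq n-1$, so by hypothesis $\Ext^{1}_{R}(M,K)=0$, which means the sequence $0\to K\to F\to M\to 0$ splits. Therefore $M$ is a direct summand of the free module $F$, hence projective.

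The converse direction is immediate: if $M$ is projective then $\Ext^{1}_{R}(M,N)=0$ for all $N$ whatsoever, in particular for the flat modules $N$ with $\lambda_R(N)\ge n-1$.

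For the boundary case $n=0$: here $M$ is merely finitely generated and flat, and the hypothesis becomes $\Ext^{1}_{R}(M,N)=0$ for all flat $N$ with $\lambda_R(N)\ge -1$, i.e.\ for all flat $N$. Take again $0\to K\to F\to M\to 0$ with $F$ free finitely generated; then $K$ is flat as above (though possibly not finitely generated, so $\lambda_R(K)$ may be $-1$, which still satisfies $\lambda_R(K)\ge -1=n-1$), and the hypothesis $\Ext^1_R(M,K)=0$ splits the sequence, giving $M$ projective.

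The only real subtlety — and the step I would be most careful about — is the bookkeeping on $\lambda_R$: verifying that in $0\to K\to F\to M\to 0$ with $F$ free finitely generated and $M$ $n$-presented, the kernel $K$ is indeed $(n-1)$-presented, so that $\lambda_R(K)\ge n-1$ and the hypothesis applies to the pair $(M,K)$. This is the standard "shifting" fact about $n$-presentation (used already in the paper's references \cite{Bo,V}), together with the flat-dimension shift $\fd_R(K)\le\max(0,\fd_R(M)-1)$; once both are in hand, the splitting argument closes the proof with no further computation.
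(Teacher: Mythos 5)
Your proof is correct and follows essentially the same route as the paper's: take $0\to K\to F\to M\to 0$ with $F$ finitely generated free, observe that $K$ is $(n-1)$-presented and flat so the hypothesis gives $\Ext^{1}_{R}(M,K)=0$, and conclude that the sequence splits. Your extra care with the $n=0$ case and the verification that $K$ is flat only makes explicit what the paper leaves implicit.
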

\begin{proof} Necessity is clear. To proof sufficiency, let $0\longrightarrow K \longrightarrow F \longrightarrow M
\longrightarrow 0$  be an exact sequence with $F$ a finitely
generated free $R$-module. Then $K$ is an $(n-1)$-presented flat
$R$-module, hence by hypothesis $\Ext^{1}_{R}(M,K)=0$ . It follows
that the exact sequence splits, making $M$ a direct summand of
$F$. Therefore $M$ is a projective $R$-module.\end{proof}

\begin{lem} \label{17} Let $R$ be a ring, and let $M$ be an $n$-presented
$R$-module such that $\fd_{R}(M)\leq d$. Then $pd_{R}(M)\leq d$ if
and only if $\Ext^{d+1}_{R}(M,N)=0$ for all $R$-modules $N$ such
that  $\lambda_{R}(N)\geq n-(d+1)$ and $\fd_{R}(N)\leq d$.
\end{lem}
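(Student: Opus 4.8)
The plan is to prove Lemma~\ref{17} by dimension shifting, reducing to the case $d=0$ which is exactly Lemma~\ref{1}. The forward implication is immediate: if $\pd_R(M)\le d$, then $\Ext^{d+1}_R(M,N)=0$ for \emph{every} module $N$, in particular for those with $\lambda_R(N)\ge n-(d+1)$ and $\fd_R(N)\le d$, so nothing needs to be checked there.

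For the converse, I would induct on $d$. The base case $d=0$ says: $M$ is $n$-presented with $\fd_R(M)\le 0$, i.e. $M$ is $n$-presented and flat, and the vanishing hypothesis reads $\Ext^1_R(M,N)=0$ for all $N$ with $\lambda_R(N)\ge n-1$ and $\fd_R(N)\le 0$ (i.e.\ $N$ flat); this is precisely the sufficiency direction of Lemma~\ref{1}, which gives $\pd_R(M)\le 0$. For the inductive step, take an exact sequence $0\to K\to F\to M\to 0$ with $F$ finitely generated free. Then $K$ is $(n-1)$-presented, and from $\fd_R(M)\le d$ with $d\ge 1$ one gets $\fd_R(K)\le d-1$. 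The dimension-shifting isomorphism $\Ext^{d}_R(K,N)\cong\Ext^{d+1}_R(M,N)$ holds for all $N$; combined with the hypothesis, $\Ext^{d}_R(K,N)=0$ for all $N$ with $\lambda_R(N)\ge n-(d+1)=(n-1)-d$ and $\fd_R(N)\le d$. But we need this vanishing for $N$ with $\fd_R(N)\le d-1$, which is a stronger restriction on $N$, so the class of $N$'s we can use is contained in the class the induction hypothesis requires. Hence by the inductive hypothesis applied to $K$ (which is $(n-1)$-presented with $\fd_R(K)\le d-1$), we get $\pd_R(K)\le d-1$, and therefore $\pd_R(M)\le d$.

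The step I expect to be the main obstacle is bookkeeping the indices so that the classes of test modules line up correctly: specifically checking that ``$\lambda_R(N)\ge n-(d+1)$ and $\fd_R(N)\le d$'' for the hypothesis on $M$ translates, after one shift, into exactly what the inductive hypothesis for $K$ demands, namely ``$\lambda_R(N)\ge (n-1)-d$ and $\fd_R(N)\le d-1$.'' The index on $\lambda$ matches on the nose since $n-(d+1)=(n-1)-d$; the flat-dimension bound only gets tighter, so restricting to $\fd_R(N)\le d-1$ is harmless. One should also note the edge case $n=0$ (where $\lambda_R(N)\ge -1$ is automatic) and $d=0$ handled by the base case, so the induction is clean. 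I would also make explicit that $\fd_R(M)\le d$ with the short exact sequence $0\to K\to F\to M\to 0$ forces $\fd_R(K)\le\max(d-1,0)$, which is the routine long-exact-sequence-in-$\Tor$ argument, and that $F$ being free contributes no higher $\Ext$.
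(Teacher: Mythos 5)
Your argument is correct and is precisely the paper's intended proof: the paper's entire justification of Lemma~\ref{17} is the sentence ``This follows from Lemma~\ref{1} by dimension shifting,'' and your induction on $d$ via the syzygy sequence $0\to K\to F\to M\to 0$, with the check that $n-(d+1)=(n-1)-d$ and that the flat-dimension restriction on the test modules $N$ only tightens, is the standard way to carry that out. The one point your ``the induction is clean'' remark glosses over (and which the paper's one-line proof shares) is that when $n<d$ the syzygies stop being finitely generated before the induction reaches $d=0$, so the base case Lemma~\ref{1} does not literally apply there; aside from that shared edge case, your write-up matches the paper's route exactly.
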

\begin{proof} This follows from Lemma \ref{1} by dimension shifting.
\end{proof}

\begin{proof}[Proof of Theorem \ref{3}] Follows from Lemma
\ref{17}.\end{proof}

Now, in the following we prove that the $(n,d)$-perfect property
descends into a faithfully flat ring homomorphism.\bigskip

\begin{thm}\label{2}

Let $R\longrightarrow S$ be a ring homomorphism making $S$ a
faithfully flat $R$-module. If $S$ is an $(n,d)$-perfect ring then
$R$ is an $(n,d)$-perfect ring.

\end{thm}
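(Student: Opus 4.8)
The plan is to use the homological characterization from Theorem \ref{3}. To show $R$ is $(n,d)$-perfect, it suffices by that theorem to prove $\Ext^{d+1}_R(M,N)=0$ for every pair of $R$-modules $M,N$ with $\lambda_R(M)\geq n$, $\fd_R(M)\leq d$ and $\fd_R(N)\leq d$. So I would start with such $M$ and $N$ and extend scalars along $R\to S$, setting $M_S=S\otimes_R M$ and $N_S=S\otimes_R N$.

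The key steps, in order, would be: first, observe that since $S$ is flat over $R$ and $M$ is $n$-presented over $R$, the module $M_S$ is $n$-presented over $S$ (tensoring a length-$n$ finite free presentation of $M$ with $S$ stays exact by flatness and produces finite free $S$-modules), so $\lambda_S(M_S)\geq n$. Second, flatness of $S$ gives $\fd_S(M_S)\leq \fd_R(M)\leq d$ and similarly $\fd_S(N_S)\leq d$ (a flat resolution of $M$ over $R$ tensors up to a flat resolution over $S$). Third, invoke the hypothesis that $S$ is $(n,d)$-perfect together with Theorem \ref{3} to conclude $\Ext^{d+1}_S(M_S,N_S)=0$. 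Fourth, use the standard base-change isomorphism for Ext: because $M$ is $n$-presented with $n\geq d+1$ in the relevant range (or more carefully, because $M$ admits a resolution by finitely generated free modules far enough out, which is exactly what $\lambda_R(M)\geq n$ buys us when $n\geq d+1$; when $n\leq d$ the statement is vacuous since then $R$ is automatically $(n,d)$-perfect by Example part (5), so we may assume $n\geq d+1$), we get $S\otimes_R \Ext^{d+1}_R(M,N)\cong \Ext^{d+1}_S(M_S,N_S)=0$. Fifth, since $S$ is faithfully flat over $R$, the vanishing of $S\otimes_R \Ext^{d+1}_R(M,N)$ forces $\Ext^{d+1}_R(M,N)=0$. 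Then Theorem \ref{3} again (the implication $(2)\Rightarrow(1)$) yields that $R$ is $(n,d)$-perfect.

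I expect the main obstacle to be justifying the base-change isomorphism $S\otimes_R\Ext^{d+1}_R(M,N)\cong\Ext^{d+1}_S(S\otimes_R M,\,S\otimes_R N)$ at the precise level of generality needed. The clean statement requires $M$ to have a projective (free) resolution that is finitely generated in degrees $0,\dots,d+1$; the hypothesis $\lambda_R(M)\geq n$ supplies finitely generated free terms $F_0,\dots,F_n$, so provided $n\geq d+1$ we are fine, and the case $n\leq d$ is handled separately and trivially as noted above. One must also be slightly careful that $S\otimes_R$ applied to this truncated finite free resolution of $M$ computes $\Ext_S(M_S,N_S)$ correctly, i.e. that $\Hom_S(S\otimes_R F_\bullet, N_S)\cong S\otimes_R\Hom_R(F_\bullet,N)$ for finitely generated free $F_i$ — this is the usual commutation of $\Hom$ out of a finitely generated free module with flat base change, applied termwise. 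Assembling these pieces gives the result; everything else is routine flatness bookkeeping.
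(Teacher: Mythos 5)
There is a genuine gap, and it is concentrated in your reduction step. You write that ``when $n\leq d$ the statement is vacuous since then $R$ is automatically $(n,d)$-perfect by Example part (5), so we may assume $n\geq d+1$.'' This is exactly backwards: Example 2.2(5) says that every ring is $(n,d)$-perfect when $n>d$ (because then the $d$-th syzygy of an $n$-presented module of flat dimension $\leq d$ is finitely presented and flat, hence projective). So the case $n\geq d+1$ is the trivial one, and the entire content of the theorem lives in the range $n\leq d$. In that range your argument collapses at precisely the point you flagged as the main obstacle: $M$ is only guaranteed a finitely generated free resolution out to degree $n\leq d$, which is not enough to make the base-change map $S\otimes_R\Ext^{d+1}_R(M,N)\to\Ext^{d+1}_S(M_S,N_S)$ an isomorphism (or even injective), so the vanishing of $\Ext^{d+1}_S(M_S,N_S)$ gives you nothing about $\Ext^{d+1}_R(M,N)$. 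The earlier steps (that $M_S$ is $n$-presented over $S$, that flat dimensions do not increase, that faithful flatness detects vanishing of a module) are all fine; it is the comparison of Ext groups that cannot be justified where it matters.

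The paper's proof sidesteps Ext entirely. It takes a length-$d$ partial free resolution $0\to P\to F_{d-1}\to\cdots\to F_0\to M\to 0$ over $R$, notes that $P$ is flat (since $\fd_R(M)\leq d$) and that $P\otimes_R S$ is projective over $S$ (being the $d$-th syzygy of $M\otimes_R S$, which has $\pd_S\leq d$ by the hypothesis on $S$), and then invokes the Raynaud--Gruson descent theorem: a flat $R$-module that becomes projective after a faithfully flat base change is projective. That descent result requires no finiteness hypothesis on $P$, which is exactly what is needed when $n\leq d$. If you want to salvage your strategy, you would have to replace the Ext base-change isomorphism by some such descent-of-projectivity argument applied to the $d$-th syzygy; as written, the proof does not go through.
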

\begin{proof} Let $M$ be an $n$-presented  $R$-module with $\fd_{R}(M)\leq
d$. Our aim is to show that $\pd_{R}(M)\leq d$. We have $
\lambda_{S}(M\otimes_{R}S)\geq n$ and $\fd_{S}(M\otimes_{R}S)\leq
d$ since $S$ is a flat $R$-module, so $\pd_{S}(M\otimes_{R}S)\leq
d$ since $S$ is an $(n, d)$-perfect ring.
\\ Let $0\longrightarrow P \longrightarrow F_{d-1}\longrightarrow
...\longrightarrow F_{1}\longrightarrow F_{0}\longrightarrow M
\longrightarrow 0$ be an exact sequence of $R$-modules, where
$F_{i}$ is a free $R$-module for each $i$ and $P$ is a flat
$A$-module. Thus  $P\otimes_{R}S$ is a projective $S$-module.
 By \cite[Example 3.1.4. page 82]{Gr} $P$ is a projective $R$-module  .\end{proof}

  We use  this result  to study the ``$(n,d)$-perfect
property''  of some particular rings.

\begin{cor}
\begin{enumerate}
\item Let $A\subset B$ be two rings such that $B$ is a flat
$A$-module. Let $S=A+XB[X]$, where X is an indeterminate over $B$
. If $S$ is an $(n,d)$-perfect ring, then so is $A$ .
\item Let $R$ be a ring and $X$ is an indeterminate over $R$. If $R[X]$ is
 an $(n,d)$-perfect ring  then so is $R$ .

\end{enumerate}

\end{cor}

\begin{proof}  1) The ring $B$ is a flat $A$-module and $XB[X]\cong B[X]$
thus $S= A + XB[X]$ is a faithfully flat $A$-module. By Theorem
\ref{2} the ring $A$ is an $(n,d)$-perfect  since $S$ is an
$(n,d)$-perfect ring.
\\ 2) Similarly, by Theorem \ref{2}, since $R[X]$  is a faithfully flat
$R$-module.\end{proof}

 We close this section by establishing the
 transfer of the $(n,d)$-perfect property to finite direct
 product.

\begin{thm}\label{26} Let $(R_{i})_{i=1,...,m}$ be a family of rings. Then
$\prod_{i=1}^{m}R_{i}$ is an $(n,d)$-perfect ring if and only if
$R_{i}$ is an $(n,d)$-perfect ring for each $i=1,...,m$ .

\end{thm}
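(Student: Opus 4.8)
The plan is to prove the two directions separately, relying on the fact that modules over a finite product $R=\prod_{i=1}^m R_i$ decompose componentwise: every $R$-module $M$ is canonically $M=\prod_{i=1}^m M_i$ with $M_i$ an $R_i$-module, where $M_i = e_i M$ for the idempotent $e_i\in R$ corresponding to the $i$-th factor. The key observations I would record first are that, under this decomposition, (a) a sequence of $R$-modules is exact iff each component sequence is exact, so free resolutions of $M$ are exactly products of free resolutions of the $M_i$; (b) consequently $\pd_R(M)=\max_i \pd_{R_i}(M_i)$ and $\fd_R(M)=\max_i \fd_{R_i}(M_i)$; and (c) $M$ is $n$-presented over $R$ iff each $M_i$ is $n$-presented over $R_i$ (this uses that a finitely generated free $R$-module is a product of finitely generated free $R_i$-modules, and the exactness criterion in (a)). Observation (c) is the one to state carefully, since it is where the finiteness of the product matters, but it is routine.

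\textbf{($\Leftarrow$)} Suppose each $R_i$ is $(n,d)$-perfect. Let $M=\prod_i M_i$ be an $n$-presented $R$-module with $\fd_R(M)\leq d$. By (c) each $M_i$ is $n$-presented over $R_i$, and by (b) each $\fd_{R_i}(M_i)\leq\fd_R(M)\leq d$. Hence $\pd_{R_i}(M_i)\leq d$ for every $i$ by hypothesis, so $\pd_R(M)=\max_i\pd_{R_i}(M_i)\leq d$ by (b). Thus $R$ is $(n,d)$-perfect.

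\textbf{($\Rightarrow$)} Suppose $R=\prod_i R_i$ is $(n,d)$-perfect; fix $j$ and let $N$ be an $n$-presented $R_j$-module with $\fd_{R_j}(N)\leq d$. Regard $N$ as an $R$-module via the projection $R\to R_j$ — equivalently, form the $R$-module $M$ whose $j$-th component is $N$ and whose other components are $0$. By (c), $M$ is $n$-presented over $R$ (the zero modules are trivially $n$-presented over $R_i$), and by (b), $\fd_R(M)=\fd_{R_j}(N)\leq d$. So $\pd_R(M)\leq d$, whence $\pd_{R_j}(N)\leq\pd_R(M)\leq d$ by (b) again. Therefore each $R_j$ is $(n,d)$-perfect.

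\textbf{Main obstacle.} There is no serious obstacle: the whole proof rests on the componentwise decomposition of modules and resolutions over a finite product. The only point requiring a moment's care is observation (c) — that $n$-presentedness is detected componentwise — because it implicitly uses that a finitely generated free module over $R$ corresponds to a finite tuple of finitely generated free modules over the $R_i$, which is exactly where the finiteness of $m$ is used (over an infinite product this step fails). Everything else is a direct application of the additivity of $\pd$ and $\fd$ over finite products.
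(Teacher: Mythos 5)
Your proposal is correct and follows essentially the same route as the paper: the paper packages your observations (b) and (c) as cited lemmas ($\pd$, $\fd$, and $\lambda$ of a product module are the sup, sup, and inf of the componentwise values, respectively) and reduces to $m=2$ by induction, but the two directions are argued exactly as you do. Your explicit padding by zero modules in the forward direction is a minor clarification of a choice the paper leaves implicit.
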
\bigskip

 The proof of this Theorem involves the following results.
 \begin{lem}\cite[Lemma 2.5.]{M}\label{9}  Let $(R_{i})_{i=1,2}$ be a family of rings and $E_{i}$
 be an $R_{i}$-module for $i=1,2$. We have :
\begin{enumerate}

 \item $\pd_{R_{1}\times R_{2}}(E_{1}\times
E_{2})=\sup\{\pd_{R_{1}}(E_{1}),\pd_{R_{2}}(E_{2})\}$.
 \item $\lambda_{R_{1}\times R_{2}}(E_{1}\times E_{2})= \inf
\{\lambda_{R_{1}}(E_{1}), \lambda_{R_{2}}(E_{2})\}$

\end{enumerate}

 \end{lem}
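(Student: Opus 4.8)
The plan is to exploit the fact that the category of $(R_1\times R_2)$-modules decomposes as the product of the categories of $R_1$-modules and $R_2$-modules. Writing $e_1=(1,0)$ and $e_2=(0,1)$ for the orthogonal idempotents of $R:=R_1\times R_2$, every $R$-module $E$ splits canonically as $E=e_1E\oplus e_2E$, where $e_1E$ is naturally an $R_1$-module and $e_2E$ an $R_2$-module; conversely $E_1\times E_2$ is recovered from the pair $(E_1,E_2)$. The two projection functors $E\mapsto e_iE$ are exact (each is a direct-summand projection), they send $R$ onto $R_i$, and more generally send the free module $R^{(\kappa)}=R_1^{(\kappa)}\times R_2^{(\kappa)}$ onto $R_i^{(\kappa)}$. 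In particular $P_1\times P_2$ is projective over $R$ if and only if $P_1$ is projective over $R_1$ and $P_2$ over $R_2$, and $\Hom$ --- hence each $\Ext^n$ --- decomposes as $\Ext^n_R(E_1\times E_2,F_1\times F_2)\cong\Ext^n_{R_1}(E_1,F_1)\oplus\Ext^n_{R_2}(E_2,F_2)$, the cross terms vanishing because an $R$-linear map carries $e_iE$ into $e_iF$ (as $e_1e_2=0$). These structural facts do all the work.

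For part (1) I would argue entirely through the $\Ext$ characterization $\pd_R(E)=\sup\{m:\Ext^m_R(E,-)\neq0\}$. By the displayed decomposition of $\Ext^n$, and since the test modules $F_1$ and $F_2$ range independently over $R_1$- and $R_2$-modules, the set of degrees $m$ for which $\Ext^m_R(E_1\times E_2,-)$ is nonzero is the union of the corresponding sets for $E_1$ over $R_1$ and for $E_2$ over $R_2$. Taking suprema yields $\pd_R(E_1\times E_2)=\sup\{\pd_{R_1}(E_1),\pd_{R_2}(E_2)\}$ at once. (Equivalently, one may build a projective resolution of $E_1\times E_2$ as the termwise product of resolutions of $E_1$ and $E_2$ for the inequality $\le$, and detect the dimension via $\Ext$ for $\ge$.)

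For part (2) recall that $\lambda_R(E)\ge m$ means $E$ admits a resolution $R^{k_m}\to\cdots\to R^{k_0}\to E\to 0$ by finitely generated free modules. For the inequality $\lambda_R(E_1\times E_2)\le\inf\{\lambda_{R_1}(E_1),\lambda_{R_2}(E_2)\}$, apply the exact functor $e_i(-)$ to such a resolution: since it carries $R^{k_j}$ to $R_i^{k_j}$, it produces an $m$-presentation of $E_i$ over $R_i$, so each $\lambda_{R_i}(E_i)\ge\lambda_R(E_1\times E_2)$. For the reverse inequality set $m=\inf\{\lambda_{R_1}(E_1),\lambda_{R_2}(E_2)\}$; if $m=-1$ then some $E_i$ fails to be finitely generated, whence $E_1\times E_2$ is not finitely generated over $R$ and $\lambda_R=-1$ as well. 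If $m\ge 0$ I would induct on $m$: the base case $m=0$ is just that a product of finitely generated modules is finitely generated over $R$; for the step, choose surjections $R_i^{k}\twoheadrightarrow E_i$ with a common $k$ (enlarging the number of generators by adjoining zero generators if necessary), giving $R^{k}\twoheadrightarrow E_1\times E_2$ with kernel $K_1\times K_2$, where each $K_i$ is $(m-1)$-presented over $R_i$; the inductive hypothesis then makes $K_1\times K_2$ an $(m-1)$-presented $R$-module, so $E_1\times E_2$ is $m$-presented.

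The only genuinely delicate point is this rank-matching in part (2): two separate finite free presentations of $E_1$ and $E_2$ need not use the same ranks at each stage, whereas a single presentation over $R$ forces equal ranks in both factors. This is exactly why I route the reverse inequality through induction together with the standard fact that ``the kernel of a surjection from a finitely generated free module onto an $n$-presented module is $(n-1)$-presented'', rather than trying to splice the two given resolutions termwise; the padding by zero generators at the top of each stage is the mechanism that reconciles the ranks.
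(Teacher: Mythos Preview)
Your argument is correct. The paper does not prove this lemma at all: it is quoted verbatim as \cite[Lemma~2.5]{M} and used as a black box, so there is no ``paper's own proof'' to compare against. Your approach---exploiting the idempotent decomposition $R=R_1\times R_2$, the resulting splitting of $\Ext$ for part~(1), and an induction on $m$ with rank-matching via Schanuel for part~(2)---is the standard one and is essentially what one finds in the cited source. The one point worth flagging is that your ``standard fact'' (the kernel of a surjection $R^k\twoheadrightarrow M$ with $M$ $n$-presented is $(n-1)$-presented) does require Schanuel's lemma together with the behaviour of $\lambda$ on split short exact sequences; you allude to this but do not spell it out, so a reader unfamiliar with it might want a reference (e.g.\ \cite{Bo} or \cite{V}).
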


 \begin{lem} \label{10} Let $(R_{i})_{i=1,2}$ be a family of rings and $E_{i}$
 be an $R_{i}$-module for $i=1,2$. We have : $\fd_{R_{1}\times R_{2}}(E_{1}\times
 E_{2})=\sup\{\fd_{R_{1}}(E_{1}),\fd_{R_{2}}(E_{2})\}$.
 \end{lem}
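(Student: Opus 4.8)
The plan is to exploit the fact that the module category over a product ring splits as a product of the two component categories. Writing $R = R_{1}\times R_{2}$ with orthogonal central idempotents $e_{1}=(1,0)$ and $e_{2}=(0,1)$, every $R$-module $M$ decomposes canonically as $M = e_{1}M \times e_{2}M$, where $e_{i}M$ inherits the structure of an $R_{i}$-module; this sets up an equivalence between the category of $R$-modules and the product of the categories of $R_{1}$- and $R_{2}$-modules. Under this correspondence the tensor product decomposes componentwise: for $R_{i}$-modules $E_{i}$ and $F_{i}$ one has $(E_{1}\times E_{2})\otimes_{R}(F_{1}\times F_{2}) \cong (E_{1}\otimes_{R_{1}}F_{1})\times(E_{2}\otimes_{R_{2}}F_{2})$. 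In particular $F_{1}\times F_{2}$ is a flat $R$-module if and only if $F_{1}$ is flat over $R_{1}$ and $F_{2}$ is flat over $R_{2}$.

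First I would use this to decompose flat resolutions. Given flat resolutions $\cdots \to P_{1}^{(i)}\to P_{0}^{(i)}\to E_{i}\to 0$ of $E_{i}$ over $R_{i}$ for $i=1,2$, the componentwise product $\cdots \to P_{1}^{(1)}\times P_{1}^{(2)} \to P_{0}^{(1)}\times P_{0}^{(2)} \to E_{1}\times E_{2}\to 0$ is a flat resolution of $E_{1}\times E_{2}$ over $R$, since exactness and flatness are both tested componentwise. Conversely, every flat resolution of $E_{1}\times E_{2}$ splits, via the idempotent decomposition, into a pair of flat resolutions of $E_{1}$ and $E_{2}$.

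From here there are two equivalent ways to finish, and I would present the $\Tor$ version as the cleanest. Applying $-\otimes_{R}(N_{1}\times N_{2})$ to a flat (or projective) resolution of $E_{1}\times E_{2}$ and taking homology, the componentwise splitting of the tensor product yields $\Tor^{R}_{k}(E_{1}\times E_{2}, N_{1}\times N_{2}) \cong \Tor^{R_{1}}_{k}(E_{1},N_{1})\times\Tor^{R_{2}}_{k}(E_{2},N_{2})$ for every $k$. Since $\fd_{R}(E_{1}\times E_{2})\le d$ is equivalent to the vanishing of $\Tor^{R}_{d+1}(E_{1}\times E_{2}, N)$ for all $R$-modules $N$, and every such $N$ has the form $N_{1}\times N_{2}$, this vanishing holds if and only if $\Tor^{R_{1}}_{d+1}(E_{1},N_{1})=0$ and $\Tor^{R_{2}}_{d+1}(E_{2},N_{2})=0$ for all $N_{1},N_{2}$; that is, if and only if $\fd_{R_{1}}(E_{1})\le d$ and $\fd_{R_{2}}(E_{2})\le d$.

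Taking the smallest such $d$ gives the asserted equality. The one point deserving care, and the main obstacle, is verifying the componentwise decomposition of $-\otimes_{R}-$ and of flatness over $R_{1}\times R_{2}$; once that structural fact is in hand the dimension formula is immediate, and it exactly parallels the projective-dimension statement recorded in Lemma \ref{9}.
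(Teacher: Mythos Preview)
Your proof is correct and follows essentially the same approach as the paper, which simply states that the argument is analogous to that of Lemma~\ref{9}(1); you have spelled out precisely that analogy, replacing projective resolutions and $\Ext$ by flat resolutions and $\Tor$, and using the componentwise splitting of the module category over $R_{1}\times R_{2}$ that underlies both computations.
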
\bigskip
 \begin{proof} This proof  is analogous to the proof of Lemma
 \ref{9} $(1)$.\end{proof}
\begin{proof}[Proof of Theorem \ref{26}]  We use  induction on m, it suffices to prove the
 assertion for $m=2$. Let $R_{1}$ and $R_{2}$ be two rings such that
 $R_{1}\times R_{2}$ is an $(n,d)$-perfect ring.
 \\ Let $E_{1}$ be an $R_{1}$-module such that $\fd_{R_{1}}(E_{1})\leq
 d$, $\lambda_{R_{1}}(E_{1})\geq n$
 and let $E_{2}$ be an $R_{2}$-module such that $\fd_{R_{2}}(E_{2})\leq
 d$, $\lambda_{R_{2}}(E_{2})\geq n$.\\ By Lemma
 \ref{10}.
$\fd_{R_{1}\times R_{2}}(E_{1}\times
 E_{2})=\sup\{\fd_{R_{1}}(E_{1}),\fd_{R_{2}}(E_{2})\}$.
  And  by Lemma
 \ref{9}(2)  $\lambda_{R_{1}\times R_{2}}(E_{1}\times E_{2})= \inf
\{\lambda_{R_{1}}(E_{1}), \lambda_{R_{2}}(E_{2})\}$ . Thus
$\lambda_{R_{1}\times R_{2}}(E_{1}\times E_{2})\geq n$ and
$\fd_{R_{1}\times R_{2}}(E_{1}\times
 E_{2}) \leq d$. So
$\pd_{R_{1}\times R_{2}}(E_{1}\times
 E_{2})\leq d \quad$   since   $ \quad R_{1}\times R_{2}$ is an $(n,d)$-perfect ring. By
 Lemma \ref{9}(1) $\pd_{R_{1}\times R_{2}}(E_{1}\times
 E_{2})=\sup\{\pd_{R_{1}}(E_{1}),\pd_{R_{2}}(E_{2})\}$ .
 \\ Thus $\pd_{R_{1}}(E_{1})\leq n$ and
$\pd_{R_{2}}(E_{2})\leq n$. Therefore $R_{1}$ is an
$(n,d)$-perfect ring and $R_{2}$ is an $(n,d)$-perfect ring.
\\

Conversely, let $R_{1}$ and $ R_{2}$ be  two $(n,d)$-perfect rings
and let $E_{1}\times E_{2}$ be an $R_{1}\times R_{2}$-module where
$E_{i}$ is an $R_{i} $-module for each $i=1,2 $, such that
$\fd_{R_{1}\times R_{2}}(E_{1}\times
 E_{2})\leq d $ and  $\lambda_{R_{1}\times R_{2}}(E_{1}\times
 E_{2})\geq n$.
 By Lemma \ref{9},  $\lambda_{R_{1}}(E_{1})\geq n$,
$\lambda_{R_{2}}(E_{2}) \geq n$ and by Lemma \ref{10},
$\fd_{R_{1}}(E_{1})\leq d$, $\fd_{R_{2}}(E_{2})\leq d$,  then
$\pd_{R_{1}}(E_{1})\leq d$ and $\pd_{R_{2}}(E_{2})\leq d$, since
$R_{1}$ and $R_{2}$ are an $(n,d)$-perfect rings . By Lemma
\ref{9}, $\pd_{R_{1}\times R_{2}}(E_{1}\times
 E_{2})\leq d $. Therefore $R_{1}\times R_{2}$ is an $(n,d)$-perfect
 rings \end{proof}

\end{section}


\begin{section}{Relationship between the $A(n)$ property and
$(n,d)$-perfect property} The purpose of the present section is to
establish a natural bridge between $A(n)$ rings and
$(n,d)$-perfect rings.
\\ First, we recall the definition of the $A(n)$ rings introduced in \cite{c}.
\begin{defn}\cite[page 139]{c}
 Let $n$ be nonnegative integer. A ring $R$ is said to be an
 $A(n)$ ring if given any exact sequence $0\longrightarrow M
 \longrightarrow E_{1} \longrightarrow ...\longrightarrow E_{n} $
 of finitely generated $R$-modules with $M$ flat and $E_{i}$ free
 for each $i$, then $M$ is projective.
\end{defn}

We show in the next Theorem  the existence of relationship between
the ``$A(n)$ property'' and ``$(n,d)$-perfect property''.

\begin{thm}\label{12}
 A ring $R$ is an $A(n)$ ring if and only if $R$ is an
 $(n,n)$-perfect ring.
\end{thm}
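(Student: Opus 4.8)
The plan is to translate the $A(n)$ condition into a statement about $n$-th syzygies, and then to use the elementary observation that if $M$ is an $n$-presented $R$-module with $\fd_R(M)\le n$, then an $n$-th syzygy of $M$, computed from a resolution by finitely generated free modules, is finitely generated and flat, while $\pd_R(M)\le n$ holds if and only if that syzygy is projective. The case $n=0$ needs nothing new: $A(0)$ is by definition the $S$-ring condition, which coincides with $(0,0)$-perfectness, so I will assume $n\ge 1$.

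First I would prove that an $A(n)$ ring is $(n,n)$-perfect. Let $M$ be $n$-presented with $\fd_R(M)\le n$, and fix an exact sequence $F_n\to F_{n-1}\to\cdots\to F_0\to M\to 0$ in which each $F_i$ is a finitely generated free $R$-module. Set $K=\Ker(F_{n-1}\to F_{n-2})$, with the convention $F_{-1}=M$ when $n=1$. Being the image of $F_n$, the module $K$ is finitely generated, and since $\fd_R(M)\le n$ a dimension shift shows that $K$ is flat. Truncating the resolution yields an exact sequence $0\to K\to F_{n-1}\to\cdots\to F_0$ of finitely generated modules with $K$ flat and the $F_i$ free, that is, precisely the configuration occurring in the definition of an $A(n)$ ring; hence $K$ is projective. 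Feeding $K$ back into $0\to K\to F_{n-1}\to\cdots\to F_0\to M\to 0$ gives $\pd_R(M)\le n$, as desired.

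Conversely I would show that an $(n,n)$-perfect ring is an $A(n)$ ring. Let $0\to M\to E_1\to\cdots\to E_n$ be exact with $M$ flat and each $E_i$ finitely generated free; note that $M$ is then finitely generated. Put $N=\Coker(E_{n-1}\to E_n)$ and read the given sequence backwards: with $F_0=E_n,\ F_1=E_{n-1},\ \dots,\ F_{n-1}=E_1$, and choosing a finitely generated free module $F_n$ mapping onto $M$, which is possible since $M$ is finitely generated, one checks that $F_n\to F_{n-1}\to\cdots\to F_0\to N\to 0$ is exact. Thus $N$ is $n$-presented, and an $n$-th syzygy of $N$ read off from this resolution is isomorphic to $M$, which is flat; hence $\fd_R(N)\le n$. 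Since $R$ is $(n,n)$-perfect, $\pd_R(N)\le n$, so that $n$-th syzygy, namely $M$, is projective.

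The substantive content here is modest; the only care needed lies in the bookkeeping: checking exactness of the two truncated complexes spot by spot, checking that $K$ is finitely generated, which is where the finiteness of the chosen free resolution of the $n$-presented module $M$ is used, checking that $N$ is genuinely $n$-presented, which is where $M$ being finitely generated enters, and identifying $M$ with the appropriate $n$-th syzygy of $N$. I expect the identification $\Ker(F_{n-1}\to F_{n-2})\cong\Im(M\to E_1)$ in the converse -- equivalently, that the resolution of $N$ read off from the given sequence really has $M$ as its $n$-th syzygy -- to be the fiddliest point, but it is a routine diagram chase through the given exact sequences. Alternatively, one can run the whole argument through $\Ext$ by invoking Lemma \ref{17} with $d=n$, but the syzygy formulation seems the most transparent.
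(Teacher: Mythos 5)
Your proof is correct and follows essentially the same route as the paper's: truncate a finite free resolution at the $n$-th syzygy for one direction, and apply the $(n,n)$-perfect hypothesis to $\Coker u_n$ for the other. The one place you are more careful than the paper is in appending an extra finitely generated free module covering $M$ to verify that $\Coker u_n$ is genuinely $n$-presented (the paper asserts $\lambda_R(\Coker u_n)\geq n$ directly from a sequence containing only $n$ free terms), and that added step is indeed needed and uses exactly the finite generation of $M$ as you say.
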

\begin{proof} Assume that $R$ is an $A(n)$ ring and let $M$ be an $R$-module such that
$\lambda_{R}(M)\geq n$ and $\fd_{R}(M)\leq n$. Then there exist an
exact sequence $0 \longrightarrow P \longrightarrow F_{n-1}
\longrightarrow ... \longrightarrow F_{0} \longrightarrow M
\longrightarrow 0$ of finitely generated $R$-modules with $P$ flat
and $F_{i} $ free for each $i$. Then $P$ is projective since $R$
is an $A(n)$ ring. Therefore $R$ is an $(n,n)$-perfect ring.

Conversely, assume that $R$ is an $(n,n)$-perfect ring. Let
$0\longrightarrow M \longrightarrow F_{1}\longrightarrow
...\stackrel{u_{n}}\longrightarrow F_{n}$
 be an exact sequence of finitely generated $R$-modules with $M$
 flat and $F_{i}$ free for each $i$. We show that $M$ is projective. The exact
sequence $$0\longrightarrow M \longrightarrow F_{1}\longrightarrow
...\stackrel{u_{n}}\longrightarrow F_{n}\longrightarrow \Coker
u_{n} \longrightarrow 0$$ show that  $\lambda_{R}(\Coker u_{n})
\geq n$ and $ \fd_{R}(\Coker u_{n}) \leq n$. Hence,
$\pd_{R}(\Coker u_{n})\leq n$ since $R$ is an $(n,n)$-perfect ring
and so $M$ is projective . Therefore $R$ is an $A(n)$ ring.
\end{proof}

The  Theorem \ref{12} combined with the results obtained by Cox
and Pendleton \cite{c} may be used to find several corollaries.

\begin{cor}
 Let $ \varphi: R\hookrightarrow T$ be an injective ring
 homomorphism.
 \begin{enumerate}
  \item If $T$ is a  $(0,0)$-perfect ring so is $R$.
  \item If $T$ is a  $(n,n)$-perfect ring $(n\geq1)$ and T is a
  flat $R$-module then $R$ is an $(n,n)$-perfect  ring.

\end{enumerate}

\end{cor}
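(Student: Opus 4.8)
The plan is to deduce both parts from Theorem \ref{12}, which says that a ring is $(n,n)$-perfect precisely when it is an $A(n)$ ring, together with the transfer results for $A(n)$ rings proved by Cox and Pendleton in \cite{c}. Note that Theorem \ref{2} does not apply here, since an injective ring homomorphism $\varphi\colon R\hookrightarrow T$ need not make $T$ a faithfully flat, or even flat, $R$-module; this is exactly why we must pass through the $A(n)$ machinery rather than argue directly with a faithfully flat descent.

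For (1): by Theorem \ref{12}, ``$T$ is $(0,0)$-perfect'' is equivalent to ``$T$ is an $A(0)$ ring'', and likewise for $R$. So it suffices to invoke the result of Cox and Pendleton \cite{c} that a subring of an $A(0)$ ring is again an $A(0)$ ring, applied to $\varphi$. In outline: if $M$ is a finitely generated flat $R$-module, then $M\otimes_R T$ is a finitely generated flat $T$-module (finite generation and flatness both survive the base change $R\to T$), hence projective over $T$ since $T$ is an $A(0)$ ring; the argument of \cite{c} then descends projectivity along $\varphi$ to conclude that $M$ is $R$-projective. Translating back via Theorem \ref{12} gives that $R$ is $(0,0)$-perfect.

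For (2): again by Theorem \ref{12} the hypothesis becomes ``$T$ is an $A(n)$ ring'' with $n\geq 1$, and the goal is ``$R$ is an $A(n)$ ring''. Here the flatness of $T$ over $R$ is precisely what is needed so that tensoring an $n$-presented $R$-module of flat dimension at most $n$ with $T$ yields an $n$-presented $T$-module of flat dimension at most $n$; with this preservation in place, the corresponding theorem of Cox and Pendleton for subrings $R\subseteq T$ with $T$ flat over $R$ yields that $R$ is an $A(n)$ ring, hence $(n,n)$-perfect by Theorem \ref{12}.

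I expect the only real work to be bookkeeping: citing the exact statements from \cite{c} (one valid for $n=0$ assuming merely an injective ring map, one for $n\geq 1$ assuming a flat extension) and threading the equivalence of Theorem \ref{12} through both ends of each argument. There is no new homological input; the corollary is essentially a restatement of Cox and Pendleton's subring results in the language of $(n,d)$-perfect rings.
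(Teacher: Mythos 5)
Your proposal is correct and follows exactly the same route as the paper: translate the $(n,n)$-perfect hypothesis and conclusion into the $A(n)$ language via Theorem \ref{12}, then invoke the subring transfer results of Cox and Pendleton (the paper cites \cite[Theorem 2.4]{c}). The extra sketch you give of how the Cox--Pendleton descent works is not needed, since both you and the paper ultimately rely on citing their theorem rather than reproving it.
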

\begin{proof} By \cite[Theorem 2.4]{c} and Theorem \ref{12}. \end{proof}

\begin{thm}
A ring $R$ is a $(1,1)$-perfect ring if and only if each pure
ideal of $R$ which is the annihilator of a finitely generated
ideal of $R$ is generated by an idempotent.
\end{thm}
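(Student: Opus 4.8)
\noindent The plan is to treat the two implications separately: the forward one by producing one well-chosen finitely presented module, and the converse by analysing the finitely generated flat modules that occur as syzygies.

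$(\Rightarrow)$ Suppose $R$ is $(1,1)$-perfect and let $I=\mathrm{Ann}(J)$ be a pure ideal, where $J=(a_{1},\dots,a_{k})$ is a finitely generated ideal. I would consider the finitely presented module $M=R^{k}/R(a_{1},\dots,a_{k})$. The map $R\to R^{k}$, $1\mapsto(a_{1},\dots,a_{k})$, has kernel exactly $\mathrm{Ann}(J)=I$, so it induces a short exact sequence $0\to R/I\to R^{k}\to M\to 0$. Since $I$ is pure, $R/I$ is flat, so $\fd_{R}(M)\le 1$, while $\lambda_{R}(M)\ge 1$. By hypothesis $\pd_{R}(M)\le 1$, so the first syzygy $R/I$ of $M$ relative to $R^{k}$ is projective; hence $0\to I\to R\to R/I\to 0$ splits, $I$ is a direct summand of $R$, and therefore $I$ is generated by an idempotent.

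$(\Leftarrow)$ Assume the idempotent condition, and let $M$ be finitely presented with $\fd_{R}(M)\le 1$; choose $0\to K\to F\to M\to 0$ with $F$ finitely generated free. Then $K$ is finitely generated (as $M$ is finitely presented) and flat (as $\fd_{R}(M)\le 1$), so it suffices to prove that $K$ is projective. Every localization $K_{\mathfrak p}$ is then free of finite rank $r(\mathfrak p)$, and $N:=\sup_{\mathfrak p}r(\mathfrak p)$ is finite (at most the number of generators of $K$); I would argue by induction on $N$, the case $N=0$ being trivial. For $N\ge 1$, look at $\mathrm{Ann}(\wedge^{N}K)$. Localizing, and using $r(\mathfrak p)\le N$, one finds that $(\wedge^{N}K)_{\mathfrak p}$ is free of rank $1$ if $r(\mathfrak p)=N$ and is $0$ otherwise, so $\mathrm{Ann}(\wedge^{N}K)$ is locally $0$ or the whole local ring, i.e.\ it is a pure ideal. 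Moreover $K\subseteq F$ yields $\wedge^{N}K\hookrightarrow\wedge^{N}F$ with $\wedge^{N}F$ finitely generated free, so $\mathrm{Ann}(\wedge^{N}K)$ is the annihilator of the finitely generated ideal of $R$ generated by the coordinates of a finite generating set of $\wedge^{N}K$. By the hypothesis, $\mathrm{Ann}(\wedge^{N}K)=eR$ for an idempotent $e$, and this yields a decomposition $R=R'\times R''$ (and correspondingly $K=K'\times K''$) such that $r$ is constantly $N$ on $\mathrm{Spec}(R')$ and everywhere $<N$ on $\mathrm{Spec}(R'')$. A finitely generated flat module of locally constant rank is finitely presented, hence $K'$ is projective over $R'$; and $R''$, being a direct factor of $R$, inherits the hypothesis, while $K''$ is again a finitely generated flat submodule of a finitely generated free $R''$-module, now of strictly smaller supremum rank, so $K''$ is projective over $R''$ by induction. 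Therefore $K$ is projective, $\pd_{R}(M)\le 1$, and $R$ is $(1,1)$-perfect.

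The main obstacle lies entirely in $(\Leftarrow)$. The load-bearing point is recognising, at each stage of the induction, that the pure ideal $\mathrm{Ann}(\wedge^{N}K)$ is an annihilator of a \emph{finitely generated} ideal, and for this one needs $\wedge^{N}K\hookrightarrow\wedge^{N}F$ to be injective; this reduces (after localizing) to the statement that the $N$-th exterior power of an injective homomorphism between finite free modules over a local ring is again injective, which follows from McCoy's theorem together with a compound-matrix identity relating the maximal minors of the exterior power to powers of the maximal minors of the original map. The remaining ingredients — that a finitely generated flat module of locally constant rank is finitely presented and hence projective, and that direct factors inherit the hypothesis — are routine; the real work is organising the induction on the supremum rank so that the successive idempotents and product decompositions reassemble to give $K$ projective.
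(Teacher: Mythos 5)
Your argument is correct, but it takes a genuinely different route from the paper. The paper disposes of this theorem in one line: it combines Theorem \ref{12} (a ring is $(n,n)$-perfect if and only if it is an $A(n)$ ring) with Cox and Pendleton's Theorem 3.8, which is exactly the characterization of $A(1)$ rings by the stated idempotent condition; in other words, the entire content is outsourced to \cite{c}. You instead give a self-contained proof, in effect reproving the Cox--Pendleton result. Your forward direction is the expected one: the module $R^{k}/R(a_{1},\dots,a_{k})$ has $R/\mathrm{Ann}(J)$ as first syzygy, and $\pd\leq 1$ forces that syzygy, hence $\mathrm{Ann}(J)$, to be a direct summand of $R$. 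Your converse is the classical rank-stratification argument for proving a finitely generated flat module projective: induct on the supremum $N$ of the local ranks of the syzygy $K$, split off the clopen locus where the rank equals $N$ via the idempotent generating $\mathrm{Ann}(\wedge^{N}K)$, and invoke the Bourbaki theorem that a finitely generated flat module with locally constant rank is projective. The one delicate point --- that $\mathrm{Ann}(\wedge^{N}K)$ really is the annihilator of a finitely generated ideal, which needs $\wedge^{N}K\to\wedge^{N}F$ injective --- you handle correctly, though more simply than you suggest: after localizing at $\mathfrak{p}$, either $r(\mathfrak{p})<N$ and $(\wedge^{N}K)_{\mathfrak{p}}=0$, or $r(\mathfrak{p})=N$ and $\wedge^{N}$ is the top exterior power of the source, where injectivity is precisely McCoy's theorem; no compound-matrix identity is required. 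What your approach buys is independence from \cite{c} and an explicit mechanism; what the paper's buys is brevity and the uniform passage through the $A(n)$ framework for all $n$.
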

\begin{proof} By \cite[Theorem 3.8]{c} and Theorem \ref{12}. \end{proof}

The next example gives a  $(1,1)$-perfect ring $R$ and  a
 multiplicative set $S$ of $R$ such that $S^{-1}R$ is not a
 $(1,1)$-perfect ring.

 \begin{exmp}\cite[Example 5.17]{c}\label{28}
 Let  $ R= \Z[ f,x_{1}, x_{2},...]$, with defining relations
 $fx_{i}(1-x_{j})=0$,  $1\leq i < j$  and $2fx_{i}=0$,  $1\leq i$.
 Put $S=\{f^{n} / n \geq 1\}$. Then $R$ is an $(1,1)$-perfect ring,
 but $S^{-1}R$ is not a  $(1,1)$-perfect ring.

 \end{exmp}
\end{section}
\begin{section}{Transfer of the $(n,d)$-perfect property in
pullbacks}\bigskip

 \quad  Pullbacks occupy an important niche in
 homological algebra because they produce
 interesting examples (see for example \cite[Section 1, Chapter
 5 ]{G}). For a history of pullbacks, see \cite[Appendix 2]{Gi} and
 \cite[pages 582-584]{Go}.\\

  The following  Theorem is the main result of this section.
\begin{thm}\label{5}
Let $A\hookrightarrow B$ be an injective flat ring homomorphism
and let $Q$ be a pure ideal of $A$ such that $QB=Q$ and
$\lambda_{A}(Q) \geq n-1$.
\begin{enumerate}
\item Assume that $B$ is an $(n,d)$-perfect ring. Then $A/Q$ is an $(n,d)$-perfect ring
if and only if $A$ is an $(n,d)$-perfect ring.
\item Assume that $B=S^{-1}A$, where $S$ is a multiplicative set of $A$.
 Then $A$ is an $(n,d)$-perfect ring if and only if $B$  and $A/Q$
are an $(n,d)$-perfect rings.
\end{enumerate}
\end{thm}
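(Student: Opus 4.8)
The plan is to present $A$ as the fibre product of the square
\[
\begin{array}{ccc}
A & \hookrightarrow & B\\
\downarrow & & \downarrow\\
A/Q & \hookrightarrow & B/Q
\end{array}
\]
and to push homological data across it. Since $Q$ is pure, $A/Q$ is $A$-flat; since $QB=Q$, the ideal $Q$ lives in $B$ and $B/Q\cong B\otimes_A(A/Q)$, so $B$, $A/Q$ and $B/Q$ are all flat over $A$ and the square is Cartesian with both vertical arrows surjective. The engine of the proof is the Milnor sequence $0\to A\to B\oplus A/Q\to B/Q\to 0$: because each of $B$, $A/Q$, $B/Q$ is $A$-flat, tensoring it over $A$ with an arbitrary module $X$ stays exact,
\[
0\longrightarrow X\longrightarrow (X\otimes_A B)\oplus (X/QX)\longrightarrow X\otimes_A(B/Q)\longrightarrow 0.\qquad(\ast_X)
\]
I would also record once and for all that $\lambda_A(A/Q)\ge n$ (using the standard bound for $\lambda$ on $0\to Q\to A\to A/Q\to 0$, with $\lambda_A(Q)\ge n-1$ and $A$ free); a straightforward induction then shows that every $n$-presented $A/Q$-module is $n$-presented over $A$.

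For the implication ``$A$ is $(n,d)$-perfect $\Rightarrow$ $A/Q$ is $(n,d)$-perfect'' in (1): let $\bar M$ be $n$-presented over $A/Q$ with $\fd_{A/Q}(\bar M)\le d$. By the remark above $\bar M$ is $n$-presented over $A$, and $\fd_A(\bar M)\le\fd_{A/Q}(\bar M)\le d$ since $A/Q$ is $A$-flat, so $\pd_A(\bar M)\le d$. Tensoring a length-$d$ projective $A$-resolution of $\bar M$ with the flat module $A/Q$ preserves exactness and ends in $\bar M\otimes_A(A/Q)=\bar M$, whence $\pd_{A/Q}(\bar M)\le d$. (This half uses neither $B$ nor the hypothesis that $B$ is $(n,d)$-perfect.)

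The reverse half of (1) carries the weight. Let $M$ be $n$-presented over $A$ with $\fd_A(M)\le d$. Flat base change gives $\lambda_B(M\otimes_A B)\ge n$ and $\fd_B(M\otimes_A B)\le d$, so $\pd_B(M\otimes_A B)\le d$ because $B$ is $(n,d)$-perfect; likewise $\pd_{A/Q}(M/QM)\le d$. The key point is a gluing lemma: \emph{if $P$ is a finitely generated flat $A$-module with $P\otimes_A B$ projective over $B$ and $P/QP$ projective over $A/Q$, then $P$ is projective over $A$.} I would prove it by Milnor patching (legitimate since $B\to B/Q$ is onto): the patched module built from the two finitely generated projectives $P\otimes_A B$, $P/QP$ along the canonical isomorphism over $B/Q$ is a finitely generated projective $A$-module, and $(\ast_P)$ (valid because $P$ is flat) identifies that patched module with $P$. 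Applying the lemma to the $d$-th syzygy $P$ of $M$ in a finite free $A$-resolution — finitely generated when $d\le n$, flat because $\fd_A(M)\le d$, with $P\otimes_A B$ and $P/QP$ equal to the $d$-th syzygies of $M\otimes_A B$ and $M/QM$, hence projective — yields $\pd_A(M)\le d$.

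Statement (2) is (1) applied to $B=S^{-1}A$, together with one further ingredient: the implication ``$A$ $(n,d)$-perfect $\Rightarrow S^{-1}A$ $(n,d)$-perfect'', which fails for localizations in general (compare Example~\ref{28}), must here be forced by the extra hypotheses. From $S^{-1}Q=Q$ and purity of $Q$ one gets $\mathrm{Spec}(A)=\mathrm{Spec}(S^{-1}A)\cup V(Q)$ with $A_{\mathfrak p}\cong(A/Q)_{\mathfrak p/Q}$ for $\mathfrak p\in V(Q)$, so flat and projective dimensions over $A$ can be computed locally on these two closed pieces; as $A/Q$ is already $(n,d)$-perfect, this constrains $S^{-1}A$ to be $(n,d)$-perfect. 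The step I expect to require the most care is the finiteness bookkeeping in the gluing argument: the $d$-th syzygy of an $n$-presented module is finitely generated only when $d\le n$, and since $d<n$ makes the statement trivial (every ring is $(n,d)$-perfect when $n>d$), the argument as sketched settles cleanly only $d=n$; the range $d>n$ needs either a gluing lemma free of finiteness hypotheses or a reduction via the $\Ext$-characterization of Theorem~\ref{3} and dimension shifting against $(\ast_N)$. A secondary delicate point is making the local dimension computation in (2) precise.
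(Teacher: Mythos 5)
Your route is genuinely different from the paper's: the authors split the theorem into Lemma~\ref{7} (transfer of the $(n,d)$-perfect property along a flat epimorphism $A\to B$ with $\lambda_A(B)\ge n$, proved via the Cartan--Eilenberg isomorphism $\Ext_A^{d+1}(M,N\otimes_AB)\cong\Ext_B^{d+1}(M\otimes_AB,N\otimes_AB)$), which handles both descent to $A/Q$ and to $S^{-1}A$, and Lemma~\ref{6}, which asserts for an \emph{arbitrary} $A$-module $E$ that each of $\lambda\ge n$, $\fd\le d$, $\pd\le d$ holds over $A$ iff it holds after $-\otimes_AB$ and $-\otimes_AA/Q$ (quoted from Dobbs--Kabbaj--Mahdou--Sobrani). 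You replace all of this by Milnor patching over the conductor square, which is more self-contained where it works. But it does not prove the theorem as stated, for the reason you yourself flag.

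The finiteness restriction in your gluing step is a genuine failure, not bookkeeping. Your lemma needs the $d$-th syzygy $P$ to be finitely generated, which an $n$-presented $M$ guarantees only for $d\le n$; combined with the trivial range $n>d$, you have settled exactly $n=d$. The case $n<d$ is the one the paper actually uses downstream (Corollary~4.4 and Example~4.5 invoke $(1,d)$- and $(m,d)$-perfectness with $d=\gldim(D)$ arbitrary), and Milnor patching is false for non-finitely-generated projectives, so there is no ``gluing lemma free of finiteness hypotheses'' to reach for. The workable repair is your second alternative, which is essentially to reprove Lemma~\ref{6}(3): test $\pd_A(M)\le d$ by $\Ext_A^{d+1}(M,N)=0$, run the long exact sequence of $\Ext_A(M,-)$ against $(\ast_N)$, and convert $\Ext_A^{d+1}(M,N\otimes_AB)$ and $\Ext_A^{d+1}(M,N/QN)$ into $\Ext$ over $B$ and $A/Q$ by flat base change. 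Until that is written out, the hard direction of (1) is unproven for $n<d$.

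Part (2) has a second gap. The implication ``$A$ $(n,d)$-perfect $\Rightarrow S^{-1}A$ $(n,d)$-perfect'' is where the hypothesis $\lambda_A(Q)\ge n-1$ (hence $\lambda_A(B)\ge n$, via the Milnor sequence and $\lambda_A(A/Q)\ge n$) must enter, since Example~\ref{28} shows the implication is false for a general localization of a $(1,1)$-perfect ring. Your $\mathrm{Spec}$-decomposition sketch never uses this hypothesis and is not an argument: ``flat and projective dimensions can be computed locally on these two closed pieces'' is precisely the kind of statement that fails for projective dimension over non-Noetherian rings (projectivity is not a local property of f.g.\ flat modules --- that failure is the whole subject of the paper). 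The paper's Lemma~\ref{7}, applied to the flat epimorphism $A\to S^{-1}A$, is the correct tool here, and your proof needs it or an equivalent.
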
\bigskip

Before proving this Theorem, we establish the following
Lemmas.\bigskip

  At the start, we recall the notion of flat epimorphism of
 rings, which is defined as
follows: Let
 $\Phi:A\rightarrow B$ be a ring homomorphism. $B$ $($ or $\Phi)$ is
 called  a flat epimorphism of $A$,  if $B$ is a flat $A$-module and
 $\Phi$ is an epimorphism, that is, for any two ring homomorphism
 $B\overset{f}{\underset{g}{\rightrightarrows}}C$, $C$ a ring,
 satisfying $f\circ \Phi= g\circ\Phi$, we have $f=g$ \cite[pages
 13-14]{G}.
  For example $S^{-1}A$ is a flat epimorphism of $A$ for every
  multiplicative set $S$ of $A$. Also, the quotient ring $A/I$ is a
  flat epimorphism of $A$ for every pure ideal $I$ of $A$, that is,
  $A/I$ is a flat $A$-module \cite[Theorem 1. 2. 15]{G}.

\begin{lem}\label{7}
Let $A$ and $B$ be two rings such that  $\Phi: A\rightarrow B$ be
a flat epimorphism of  $A$ and  $\lambda_{A}(B)\geq n$.
\\ If $A$ is an $(n,d)$-perfect ring then $B$ is an $(n,d)$-perfect ring.
\\ In particular, if $A$ is an $(n,d)$-perfect ring, then so is the quotient
ring $A/I$ for every pure ideal $I$ of $A$  such that
$\lambda_{A}(I)\geq n-1$ $(n \geq 0)$.
\end{lem}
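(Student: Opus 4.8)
The plan is to descend the testing module from $B$ to $A$, apply there the hypothesis that $A$ is $(n,d)$-perfect, and then transport the resulting finite projective resolution back up to $B$ by base change. Let $M$ be an $n$-presented $B$-module with $\fd_B(M)\leq d$; the goal is $\pd_B(M)\leq d$. First I would check that $M$, viewed as an $A$-module via $\Phi$, is still $n$-presented, i.e. $\lambda_A(M)\geq n$ (this is the only non-formal step, see below). Next, since $B$ is $A$-flat, every flat $B$-module is flat over $A$, so $\fd_A(M)\leq\fd_B(M)\leq d$. Thus $M$ is an $n$-presented $A$-module of flat dimension at most $d$, and because $A$ is $(n,d)$-perfect we obtain a projective resolution $0\to Q_d\to\cdots\to Q_0\to M\to 0$ of $A$-modules. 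Applying $B\otimes_A-$ (which is exact since $B$ is $A$-flat, and carries projective $A$-modules to projective $B$-modules) and using the canonical isomorphism $B\otimes_A M\cong M$ (valid because $\Phi$ is a ring epimorphism and $M$ is a $B$-module), we get an exact sequence $0\to B\otimes_A Q_d\to\cdots\to B\otimes_A Q_0\to M\to 0$ of $B$-modules with each $B\otimes_A Q_i$ projective over $B$. Hence $\pd_B(M)\leq d$, and $B$ is $(n,d)$-perfect.

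It remains to verify $\lambda_A(M)\geq n$, and this is exactly where the hypothesis $\lambda_A(B)\geq n$ is needed. I would establish, by induction on $k$, the slightly more general claim that for every integer $k$ with $0\leq k\leq n$, any $B$-module $L$ with $\lambda_B(L)\geq k$ satisfies $\lambda_A(L)\geq k$. For $k=0$ this is clear: $L$ is a quotient of a finitely generated free $B$-module $B^r$, which is finitely generated over $A$ since $\lambda_A(B)\geq 0$. For $k\geq 1$, choose an exact sequence $0\to K\to B^r\to L\to 0$ of $B$-modules with $\lambda_B(K)\geq k-1$; by induction $\lambda_A(K)\geq k-1$, while $\lambda_A(B^r)\geq\lambda_A(B)\geq n\geq k$, so the standard estimate $\lambda_A(L)\geq\min\{\lambda_A(B^r),\lambda_A(K)+1\}$ for the quotient term of a short exact sequence yields $\lambda_A(L)\geq\min\{n,k\}=k$. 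The case $L=M$, $k=n$ supplies the missing step.

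Finally, the ``in particular'' clause follows at once: for a pure ideal $I$ of $A$ the quotient map $A\to A/I$ is a flat epimorphism (as recalled just before the statement), and the short exact sequence $0\to I\to A\to A/I\to 0$ together with $\lambda_A(A)=+\infty$ gives $\lambda_A(A/I)\geq\lambda_A(I)+1\geq n$ as soon as $\lambda_A(I)\geq n-1$; the first part then applies with $B=A/I$. I expect the descent of $n$-presentedness---and being careful that $\lambda_A(B)\geq n$ is precisely the amount of finiteness required there---to be the only genuinely delicate point, everything else being routine homological bookkeeping with flat epimorphisms.
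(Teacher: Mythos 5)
Your argument is correct, and its overall skeleton coincides with the paper's: restrict the test module $M$ along $\Phi$, check that $\lambda_A(M)\geq n$ and $\fd_A(M)\leq\fd_B(M)\leq d$, invoke the $(n,d)$-perfectness of $A$, and then transport the conclusion back to $B$ using the epimorphism property. The two places where you diverge are both worth noting. First, for the descent of $n$-presentation you give a self-contained induction via the estimate $\lambda_A(L)\geq\min\{\lambda_A(B^r),\lambda_A(K)+1\}$ applied to $0\to K\to B^r\to L\to 0$; the paper simply cites this as \cite[Lemma 2.6]{DKM}, so you are reproving their external reference, correctly, and your argument makes visible exactly where the hypothesis $\lambda_A(B)\geq n$ enters. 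Second, and more substantively, your return trip to $B$ is different: you take a length-$d$ projective $A$-resolution of $M$ and apply $B\otimes_A-$, using flatness for exactness and the epimorphism property for $B\otimes_AM\cong M$, whereas the paper instead uses the Cartan--Eilenberg change-of-rings isomorphism $\Ext_A^{d+1}(M,N\otimes_AB)\cong\Ext_B^{d+1}(M\otimes_AB,N\otimes_AB)$ together with $N\otimes_AB\cong N$ to conclude $\Ext_B^{d+1}(M,N)=0$ for all $B$-modules $N$. Your version is more elementary (no $\Ext$ comparison needed) and arguably cleaner; the paper's version has the mild advantage of slotting directly into the $\Ext$-vanishing characterization of Theorem \ref{3}. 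Your handling of the ``in particular'' clause, via $0\to I\to A\to A/I\to 0$ and $\lambda_A(A)=\infty$, is exactly what the paper intends.
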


\begin{proof} Let $M$ be a $B$-module such that $\lambda_{B}(M)\geq n$
and $\fd_{B}(M)\leq d$. Our aim is to show that $\pd_{B}(M)\leq
d$.
\\ By hypothesis we have $\Tor^{A}_{k}(M,B)=0$ for all $k>0$.
  By  \cite[Proposition 4.1.3]{CE}, we have for any $B$-module $N$
$$(\ast) \qquad  \Ext_{A}^{d+1}(M,N\otimes_{A}B)\cong
\Ext_{B}^{d+1}(M\otimes_{A}B,N\otimes_{A}B)$$ From \cite[Theorem
1.2.19]{G} we get  $M\otimes_{A}B\cong M$ and $N\otimes_{A}B\cong
N$.
\\On the other hand, $\fd_{A}(M)\leq \fd_{B}(M)\leq d $ \cite[Exercise 10.p.123]{CE} and
$\lambda_{A}(M) \geq n$ \cite[Lemma 2.6]{DKM} . Thus
$\pd_{A}(M)\leq d$ since $A$ is an $(n,d)$-perfect ring. Hence,
$(\ast)$ implies $\Ext_{B}^{d+1}(M,N)=0$, therefore
$\pd_{B}(M)\leq d$.\end{proof}

The Example \ref{28} shows that Lemma \ref{7} is not true in
general without assuming that  $\lambda_{A}(B)\geq n$.

\begin{lem}\label{6}

Let $A\hookrightarrow B$ be an injective flat ring homomorphism
and let $Q$ be a pure ideal of $A$ such that $QB=Q$. Let $E$ be an
$A$-module. Then:

\begin{enumerate}
\item $\lambda_{A}(E) \geq n \Leftrightarrow \lambda_{B}(E\otimes
_{A}B) \geq n$ and $ \lambda_{A/Q}(E\otimes_{A}A/Q)\geq n$.

\item $\fd_{A}(E)\leq d \Leftrightarrow
\fd_{B}(E\otimes_{A}B)\leq d  $ and $ \fd
_{A/Q}(E\otimes_{A}A/Q)\leq d$.
\item $\pd_{A}(E)\leq d \Leftrightarrow \pd_{B}(E\otimes_{A}B)\leq d $
and $ \pd _{A/Q}(E\otimes_{A}A/Q)\leq d$.
\end{enumerate}
\end{lem}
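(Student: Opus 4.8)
The plan is to prove the three biconditionals of Lemma \ref{6} by exploiting the decomposition induced by the pure ideal $Q$, together with the flatness of $B$ over $A$ and the relation $QB=Q$. The key structural fact is that $A/Q$ is a flat epimorphic image of $A$ (since $Q$ is pure), and that $B$ is flat over $A$; moreover the hypothesis $QB=Q$ forces $B\otimes_A (A/Q)\cong B/QB=B/Q$ to behave well, and in fact one obtains a ``Milnor-type'' situation where an $A$-module $E$ is controlled by its two localizations $E\otimes_A B$ and $E\otimes_A A/Q$. Concretely, I would first record the ring-theoretic lemma that for any $A$-module $E$ the natural map $E\to (E\otimes_A B)\times(E\otimes_A A/Q)$ has kernel and cokernel killed by both $Q$ and $B$ in the appropriate sense, so that $\Tor$ and $\Ext$ over $A$ split as a product of the corresponding groups over $B$ and over $A/Q$. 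This is the engine behind all three parts.

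For part (2), the direction $(\Rightarrow)$ is immediate: $B$ and $A/Q$ are flat $A$-algebras, so tensoring a length-$d$ flat resolution of $E$ over $A$ with $B$ (resp. $A/Q$) yields a flat resolution over $B$ (resp. over $A/Q$), whence $\fd_B(E\otimes_A B)\le d$ and $\fd_{A/Q}(E\otimes_A A/Q)\le d$. For $(\Leftarrow)$, I would compute $\Tor^A_{d+1}(E,N)$ for an arbitrary $A$-module $N$ and show it embeds into (or is isomorphic to) $\Tor^B_{d+1}(E\otimes_A B, N\otimes_A B)\oplus \Tor^{A/Q}_{d+1}(E\otimes_A A/Q, N\otimes_A A/Q)$ using the flat-epimorphism change-of-rings isomorphism $\Tor^A_k(E,N\otimes_A B)\cong\Tor^B_k(E\otimes_A B,N\otimes_A B)$ (and likewise for $A/Q$), combined with the splitting of $N$ as above; both summands vanish by hypothesis, so $\fd_A(E)\le d$. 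Part (1) is handled the same way but is even more elementary: $\lambda_A(E)\ge n$ is a finiteness-of-presentation statement, and since $B$ and $A/Q$ are flat, $n$-presentation ascends under $\otimes_A B$ and $\otimes_A A/Q$; conversely, if $E\otimes_A B$ and $E\otimes_A A/Q$ are each $n$-presented, then using $QB=Q$ and purity one glues finite presentations (this is exactly the kind of descent statement for which the paper already cites \cite[Lemma 2.6]{DKM} in the proof of Lemma \ref{7}, so I would invoke that machinery).

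For part (3), $(\Rightarrow)$ is again trivial by flatness of the two algebras applied to a length-$d$ projective resolution. The substantive direction is $(\Leftarrow)$: assuming $\pd_B(E\otimes_A B)\le d$ and $\pd_{A/Q}(E\otimes_A A/Q)\le d$, I want $\pd_A(E)\le d$, i.e. $\Ext^{d+1}_A(E,N)=0$ for all $A$-modules $N$. Here I would use the change-of-rings isomorphism for flat epimorphisms that already appears in the proof of Lemma \ref{7}, namely $\Ext^{d+1}_A(E,N\otimes_A B)\cong\Ext^{d+1}_B(E\otimes_A B,N\otimes_A B)$ and the analogue $\Ext^{d+1}_A(E,N\otimes_A A/Q)\cong\Ext^{d+1}_{A/Q}(E\otimes_A A/Q,N\otimes_A A/Q)$, both of which vanish; then the splitting of $N$ into its $B$-part and $A/Q$-part (the cokernel/kernel of $N\to (N\otimes_A B)\times(N\otimes_A A/Q)$ being themselves modules of the same two types) propagates the vanishing to $\Ext^{d+1}_A(E,N)$ via the long exact sequence.

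The main obstacle I anticipate is making the ``$N$ splits into a $B$-module piece and an $A/Q$-module piece'' step precise and rigorous. It is true that $A\to B\times A/Q$ is a flat epimorphism (being a composite/combination of two flat epimorphisms with the compatibility $QB=Q$), and that every $A$-module sits in an exact sequence relating it to its base changes along $B$ and $A/Q$; but pinning down exactly which short exact sequences to use, and checking that the connecting pieces are again of the two required types so that the homological vanishing is genuinely inherited, requires care. I expect to lean on \cite[Theorem 1.2.19]{G} (which identifies $M\otimes_A B\cong M$ for modules already over a flat epimorphic image) and on the Mayer--Vietoris / conductor-square formalism for the pullback $A=B\times_{B/Q}A/Q$ — the hypothesis $QB=Q$ is precisely what makes $A$ the pullback of $B$ and $A/Q$ over $B/QB$. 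Once that square is in hand, parts (1)–(3) are formal consequences of flat base change and dimension shifting, with no essentially new difficulty.
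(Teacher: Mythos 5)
The paper does not actually prove Lemma \ref{6}; it refers the reader to \cite[Lemma 2.4]{DKMn}, so the real question is whether your argument stands on its own. The sound engine is one you circle around but never land on: purity of $Q$ and $QB=Q$ imply that for every maximal ideal $m$ of $A$ either $Q\subseteq m$, $Q_m=0$ and $A_m=(A/Q)_m$, or $Q\not\subseteq m$, $Q_m=A_m$ and $A_m=B_m$; hence $C:=B\times A/Q$ is a \emph{faithfully} flat $A$-algebra. Combined with Lemmas \ref{9} and \ref{10}, all three equivalences reduce to ascent and descent of $\lambda$, $\fd$, $\pd$ along $A\to C$: ascent is flat base change, descent of $n$-presentation and of flatness along faithfully flat maps is standard, and descent of projectivity is the Gruson--Raynaud theorem \cite{Gr} (precisely the ingredient the paper already invokes in Theorem \ref{2}), applied to the $d$-th syzygy of $E$.

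The genuine gap is in part (3), the direction you yourself flag as substantive. You propose to deduce $\Ext^{d+1}_A(E,N)=0$ from the vanishing of $\Ext^{d+1}_A(E,-)$ on $B$-modules and on $A/Q$-modules by splitting $N$ via $0\to N\to (N\otimes_AB)\times(N\otimes_AA/Q)\to D\to 0$ and chasing the long exact sequence. This fails twice over. First, $D$ is a flat $A$-module but in general neither a $B$-module nor an $A/Q$-module (already for $N=A$ it is $(B\times A/Q)/A$), so the claim that the connecting pieces are again of the two required types is false. Second, and more fatally, even if every term of the coresolution were of the required types, the long exact sequence only exhibits $\Ext^{d+1}_A(E,N)$ as a quotient of $\Ext^{d}_A(E,D)$ --- a group in degree $d$, about which your hypotheses say nothing; equivalently, in the spectral sequence of the Amitsur coresolution the terms $\Ext^q_A(E,J^p)$ with $p+q=d+1$ and $q\le d$ need not vanish. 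No amount of dimension shifting extracts $\pd_A(E)\le d$ from Ext-vanishing on $C$-modules alone; you must instead descend projectivity of the $d$-th syzygy along the faithfully flat map $A\to C$ via \cite{Gr}. Parts (1) and (2) are essentially correct once routed through $C$: for (2), $\Tor^A_{d+1}(E,N)\otimes_AC\cong\Tor^C_{d+1}(E\otimes_AC,N\otimes_AC)=0$ and faithful flatness kills the left-hand side; for (1), replace the citation of \cite[Lemma 2.6]{DKM}, which concerns modules already defined over a flat epimorphic image, by ordinary faithfully flat descent of $n$-presentation.
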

 \begin{proof} Similar to that of \cite[Lemma 2.4]{DKMn} will be
 omitted.\end{proof}

\begin{proof}[Proof of Theorem \ref{5}] $1)$ If $A$ is an
 $(n,d)$-perfect ring since $Q$ is an $(n-1)$-presented pure ideal
 of $A$, by Lemma \ref{7}. $A/Q$ is an $(n,d)$-perfect
ring. \\ Conversely, assume that $B$ and $A/Q$ are an
$(n,d)$-perfect rings. Let $M$ be an $A$-module such that
$\lambda_{A}(M)\geq n$ and $\fd_{A}(M)\leq d$.
\\Then  $\lambda_{B}(M\otimes_{A}B)\geq n$ and $\fd_{B}(M\otimes_{A}B)\leq d $. So $\pd_{B}(M\otimes_{A}B)\leq d
$ since $B$ is an $(n,d)$-perfect ring. Also
$\lambda_{A/Q}(M\otimes_{A}A/Q)\geq n$
 and $ \fd
_{A/Q}(M\otimes_{A}A/Q)\leq d$ .  So  $ \pd
_{A/Q}(M\otimes_{A}A/Q)\leq d$  since  $A/Q$ is an $(n,d)$-perfect
rings. By Lemma \ref{6}. $\pd_{A}(M)\leq n$.  Therefore $A$ is an
$(n,d)$-perfect ring.
\\$2)$ Follows from Lemma \ref{7}. Lemma \ref{6} and $1)$.\end{proof}

\begin{cor}
 Let $D$ be an integral domain, $K=\qf(D)$ and let $n\geq 2$, $n\geq 0$  and $d\geq 0$ be
a positive integers. Consider the quotient ring
$S=K[X]/(X^{n}-X)=K+ \overline{X}K[\overline{X}] = K +
  I$ with $I= \overline{X}K[\overline{X}]$.
  Set $R=D + I$. Then $R$ is an $(m,d)$-perfect ring
if and only if $D$ is an $(m,d)$-perfect ring.

\end{cor}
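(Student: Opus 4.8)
The plan is to present $R=D+I$ as a pullback that fits the hypotheses of Theorem~\ref{5}(1), with $A=R$, $B=S$ and $Q=I$, and then simply read off the statement. So I would organize the argument into three tasks: (a) describe the ring $S$ and the inclusion $R\hookrightarrow S$; (b) check that $I$ is a pure ideal of $R$ satisfying $IS=I$ and $\lambda_{R}(I)\geq m-1$; (c) apply Theorem~\ref{5}.

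For task (a), start from $X^{n}-X=X(X^{n-1}-1)$, where $X$ and $X^{n-1}-1$ are coprime in $K[X]$ (since $0$ is not a root of $X^{n-1}-1$, using $n\geq 2$). Hence the principal ideal $I=\overline{X}S$ is idempotent: from $\overline{X}=\overline{X}\cdot\overline{X}^{\,n-2}$ one gets $\overline{X}\in I^{2}$, so $I=I^{2}$. Also $S$ is a finite-dimensional $K$-algebra, hence Noetherian, so by the Proposition of Section~\ref{sec:2} it is an $(m,d)$-perfect ring for all $m,d\geq 0$. Finally the inclusion $R=D+I\hookrightarrow S=K+I$ is injective and flat, the latter because $S$ is a localization of $R$: for $T=D\setminus\{0\}$ one has $T^{-1}R=T^{-1}D+T^{-1}I=K+I=S$, using that $I$ is already a $K$-vector space. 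In passing, $D\cap I=0$ in $S$, so $R/I\cong D$.

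For task (b), being a finitely generated idempotent ideal of the commutative ring $S$, $I$ is generated by an idempotent, $I=eS$ with $e^{2}=e$; and this generator lies in $I$, hence in $R$, since $e=e\cdot 1\in eS=I\subseteq R$. Then $ei=i$ for every $i\in I$ forces $I=eR$, so $I$ is a direct summand of the free $R$-module $R$ --- in particular a pure ideal of $R$ --- and $IS=eS=I$. Moreover $\lambda_{R}(I)=+\infty$, since splicing the split exact sequences $0\to eR\to R\to(1-e)R\to 0$ and $0\to(1-e)R\to R\to eR\to 0$ yields a resolution of $I$ by finitely generated free $R$-modules of infinite length, so $I$ is $k$-presented for every $k$.

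Granting (a) and (b), all hypotheses of Theorem~\ref{5}(1) are met for $A=R$, $B=S$, $Q=I$, and $B$ is $(m,d)$-perfect; the theorem then gives that $R/I\cong D$ is an $(m,d)$-perfect ring if and only if $R$ is an $(m,d)$-perfect ring, which is the assertion. I expect the only delicate point to be the first half of task (b): upgrading ``$I$ is idempotent and finitely generated as an ideal of $S$'' to ``$I$ is pure as an ideal of the \emph{smaller} ring $R$''. What makes this go through is precisely that the idempotent generator of $I$ must already belong to $I$, hence to $R$; after that, the remaining checks in (a) and (b) are routine bookkeeping. \cqfd
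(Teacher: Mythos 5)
Your proposal is correct and follows the same overall strategy as the paper: verify the hypotheses of Theorem~\ref{5} for $A=R$, $B=S$, $Q=I$ and read off the conclusion. The differences are in how the hypotheses are checked, and they are worth noting. The paper establishes purity of $I$ by exhibiting, for each $u\in I$, the relation $u(1-\overline{X}^{\,n-1})=0$ with $\overline{X}^{\,n-1}\in I$, and proves $\lambda_{R}(I)=\infty$ by computing $\mathrm{Ann}_{R}(I)=R(1-\overline{X}^{\,n-1})$ and $\mathrm{Ann}_{R}(R(1-\overline{X}^{\,n-1}))=I$ by hand; you instead observe that $I$ is a finitely generated idempotent ideal of $S$ whose idempotent generator already lies in $R$, so that $I=eR$ is a direct summand of $R$ --- this yields purity, $IS=I$, and $\lambda_{R}(I)=\infty$ in one stroke and is arguably cleaner. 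You also explicitly verify that $R\hookrightarrow S$ is flat by identifying $S$ with $T^{-1}R$ for $T=D\setminus\{0\}$, a hypothesis of Theorem~\ref{5} that the paper's proof passes over in silence; and you reach ``$S$ is $(m,d)$-perfect'' via Noetherianity rather than via ``Artinian $\Rightarrow$ perfect''. Both routes are valid. One small slip: to see $I=I^{2}$ you should write $\overline{X}=\overline{X}\cdot\overline{X}^{\,n-1}$ (both factors in $I$ since $n\geq 2$), not $\overline{X}=\overline{X}\cdot\overline{X}^{\,n-2}$, which is false for $n>2$; this does not affect the argument.
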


\begin{proof} First we show that $I$ is a pure ideal of $R$. Let $u := \overline{X}^{i} ( a_{0} +
a_{1}\overline{X}+ ... + a_{n-1}\overline{X}^{n-1})$ be an element
of $I$,  where $a_{i}\in K$ for $1\leq i \leq n-1$, and $a_{0}\neq
0$. Hence $u( 1- \overline{X}^{n-1})= 0$  $(\ast)$ since
$\overline{X}^{i}(1-\overline{X}^{n-1})=\overline{X}^{i}-\overline{X}^{n+(i-1)}=\overline{X}^{i}-
\overline{X}^{n}\overline{X}^{i-1}=\overline{X}^{i}-\overline{X}^{i}=\overline{0}$.
Therefore,
$I$ is a pure ideal of $R$ by \cite[Theorem 1.2.15]{G} since $\overline{X}^{n-1} \in I$. \\
Our aim is to show that $\lambda_{R}(I) =\infty $. We have
$R\overline{X}^{n-1}
=(D+\overline{X}K[\overline{X}])\overline{X}^{n-1}
=D\overline{X}^{n-1}+\overline{X}K[\overline{X}]
=D\overline{X}^{n-1}+I =I$ since $\overline{X}^{n} =\overline{X}$.\\
We claim that $Ann_{R}(I) =R(1-\overline{X}^{n-1})$.  Indeed, by
$(\ast)$ $R(1-\overline{X}^{n-1}) \subseteq Ann_{R}(I)$.
Conversely, let $v := d + a_{1}\overline{X}+ ... +
a_{n-1}\overline{X}^{n-1} \in Ann_{R}(I)$, where $d \in D$ and
$a_{i}\in K$. Hence $0 =(d + a_{1}\overline{X}+ ... +
a_{n-1}\overline{X}^{n-1})\overline{X}^{n-1} = a_{1}\overline{X}+
... + (d+a_{n-1})\overline{X}^{n-1}$ and so $a_{1} =a_{2} =\ldots
=a_{n-2} =0$ and $d+a_{n-1} =0$. This means,
$v =d(1-\overline{X}^{n-1}) \in R(1-\overline{X}^{n-1})$ as desired. \\
Also, the same proof as above shows that
$Ann_{R}(R(1-\overline{X}^{n-1})) =I$.
Therefore, $\lambda_{R}(I) =\infty $ as desired.  \\
On the other hand, we have $S$ an artinian ring. According to
\cite[Corollary 28.8]{A} we deduce $S$ is a perfect ring, so $S$
is an $(m,d)$-perfect ring.
 \\We obtain the result by Theorem
\ref{5} and this completes the proof of Corollary 4.4. \end{proof}

From this  Corollary  we deduce easily the following example.

\begin{exmp}
 Let $D$ be an integral domain such that $\gldim(D)=d$, \\let
$K=\qf(D)$ and let $n\geq 2$.
  \\ Consider the quotient ring  $S=K[X]/(X^{n}-X)=K+ \overline{X}K[\overline{X}] = K +
  I$ with $I= \overline{X}K[\overline{X}]$.
  Set $R=D + I$.
   Then $R$ is a  $(1,d)$-perfect ring.

\end{exmp}
\begin{proof} Clear by Corollary 4.4 since $D$ is an $(1,d)$-perfect ring
(since  $\gldim(D) =d$).\end{proof}
\end{section}


\end{document}